\title{Proper actions on strongly regular homogeneous spaces}
\author{Maciej Boche\'nski}
\begin{document}

\newtheorem{theorem}{Theorem}
\newtheorem{proposition}{Proposition}
\newtheorem{lemma}{Lemma}
\newtheorem{definition}{Definition}
\newtheorem{example}{Example}
\newtheorem{note}{Note}
\newtheorem{corollary}{Corollary}
\newtheorem{remark}{Remark}
\newtheorem{fact}{Fact}

\maketitle{}

\begin{abstract}
Let $G/H$ be a strongly regular homogeneous space such that $H$ is a Lie group of inner type. We show that $G/H$ admits a proper action of a discrete non-virtually abelian subgroup of $G$ if and only if $G/H$ admits a proper action of a subgroup $L\subset G$ locally isomorphic to $SL(2,\mathbb{R}).$ We classify all such spaces. 
\end{abstract}

\section{Introduction}\label{sec:intro}

Recall that if $L$ is a locally compact topological group acting continuously on a locally Hausdorff topological space $M$ then this action is called {\it \textbf{proper}} if for every compact subset $C \subset M$ the set
$$L(C):=\{  g\in L \ | \ g\cdot C \cap C \neq \emptyset \}$$
is compact. If $L$ is discrete and acts properly on $M$ then we say that L acts {\it \textbf{properly discontinuously}} on $M.$ We will restrict our attention to the case where $M=G/H$ is a homogeneous space of reductive type and $L \subset G.$ In more detail we assume that $G$ is a linear connected reductive real Lie group with the Lie algebra $\mathfrak{g}$ and $H\subset G$ is a closed subgroup of $G$ with finitely many connected components. Also let $\mathfrak{h}$ be the Lie algebra of $H.$

\begin{definition}
 The subgroup $H$ is reductive in $G$ if $\mathfrak{h}$ is reductive in $\mathfrak{g},$ that is, there exists a Cartan involution $\theta $ of $\mathfrak{g}$ for which $\theta (\mathfrak{h}) = \mathfrak{h}.$
 The space $G/H$ is called the homogeneous space of reductive type.
 \label{def1}
\end{definition}

The problem of proper actions was studied for example in \cite{kul}, \cite{kob2}, \cite{kob1}, \cite{ben}, \cite{ok}, \cite{kas} and \cite{bt}. We also refer the reader to the excellent survey \cite{ko}.

In the present paper we would like to examine the connection between the following three conditions
\begin{itemize}
	\item C1 := the space $G/H$ admits a properly discontinuous action of an infinite subgroup of $G,$
	\item C2 := the space $G/H$ admits a properly discontinuous action of a non-virtually abelian infinite subgroup of $G,$
	\item C3 := the space $G/H$ admits a proper action of a subgroup $L\subset G$ locally isomorphic to $SL(2,\mathbb{R}),$
\end{itemize}
(a discrete group is non-virtually abelian if it does not contain an abelian subgroup of finite index). Notice that if $H$ is compact then every closed subgroup of $G$ acts discontinuously on $G/H.$ For example the mentioned three conditions are equivalent:
$$\text{\rm C1} \Leftrightarrow \text{\rm C2} \Leftrightarrow \text{\rm C3}.$$
However this is not the case with non-compact $H.$ Indeed let
$$\mathfrak{g}=\mathfrak{k}+\mathfrak{p}$$
be a Cartan decomposition of $\mathfrak{g}$ induced by a Cartan involution $\theta$ and choose a maximal abelian subspace $\mathfrak{a}$ of $\mathfrak{p}.$ One can show that all maximal abelian subspaces of $\mathfrak{p}$ are conjugate and therefore we can define  the \textbf{\textit{real rank}} of $\mathfrak{g}$ by
$$\text{\rm rank}_{\mathbb{R}}\mathfrak{g} := \text{\rm dim} (\mathfrak{a}).$$
In this setting we can state the famous Calabi-Markus phenomenon proved by T. Kobayashi (c.f. Corollary 4.4 in \cite{kob2})

\begin{center}
$G/H$ fulfills the condition C1 if and only if $\text{\rm rank}_{\mathbb{R}}\mathfrak{g} > \text{\rm rank}_{\mathbb{R}}\mathfrak{h}.$ 
\end{center}

Take $K\subset G$ the maximal compact subgroup corresponding to $\mathfrak{k}.$ The group $K$ acts on $\mathfrak{p}$ by the adjoint representation. Define the Weyl group $W_{g}:=N_{K}(\mathfrak{a})/Z_{K}(\mathfrak{a})$ where $N_{K}(\mathfrak{a})$ (resp. $Z_{K}(\mathfrak{a})$) denotes the normalizer (resp. centralizer) of $\mathfrak{a}$ in $K.$ Then $W_{g}$ acts on $\mathfrak{a}$ by orthogonal transformations and $W_{g}$ is isomorphic to the finite reflection group generated by the restricted root system of $\mathfrak{g}$ (see Sections \ref{sec:resroot} and \ref{sec:ref}). Let $w_{0} \in W_{g}$ be the longest element. Put
\begin{equation}
\mathfrak{b}:= \{  X\in \mathfrak{a} \ | \ -w_{0}(X)=X \}.
\label{eq1}
\end{equation}
Also let $\mathfrak{h}=\mathfrak{k}_{h}+\mathfrak{p}_{h}$ be a Cartan decomposition of $\mathfrak{h}$ with respect to $\theta |_{\mathfrak{h}}.$ Notice that we can choose a maximal abelian subspace $\mathfrak{a}_{h}$ of $\mathfrak{p}$ so that $\mathfrak{a}_{h} \subset \mathfrak{a}.$
We have the following.
\begin{theorem}[Theorem 1 in \cite{ben}]
The group $G$ contains a discrete and non-virtually abelian subgroup which acts properly discontinuously on $G/H$ if and only if for every $w$ in $W_{g}$, $w \cdot \mathfrak{a}_{h}$ does not contain $\mathfrak{b}.$
\label{ben}
\end{theorem}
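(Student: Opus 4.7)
The plan is to combine the Kobayashi--Benoist properness criterion with the theory of limit cones and a ping-pong construction. Recall that for $g\in G$ the Cartan projection $\mu(g)\in\overline{\mathfrak{a}^+}$ is defined via the $KAK$ decomposition $g=k_1\exp(\mu(g))k_2$; a closed subgroup $L\subset G$ acts properly on $G/H$ iff the $W_g$-saturations of the asymptotic cones of $\mu(L)$ and $\mu(H)$ intersect only at the origin. The crucial identity is $\mu(g^{-1})=-w_0\mu(g)$, so every limit cone in $\overline{\mathfrak{a}^+}$ is stable under $-w_0$, and by definition $\mathfrak{b}$ is the fixed set of $-w_0$.

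\emph{Necessity.} Assume $\Gamma\subset G$ is discrete, non-virtually abelian, and acts properly on $G/H$. The Tits alternative, applied to $\Gamma$ as a subgroup of the linear group $G$, yields a non-abelian free subgroup $\Gamma_0\subset \Gamma$. By Benoist's limit cone theorem the limit cone $\mathcal{L}_{\Gamma_0}\subset\overline{\mathfrak{a}^+}$ is convex of positive dimension. Since $\Gamma_0$ is closed under inverses, $\mathcal{L}_{\Gamma_0}$ is $-w_0$-invariant; averaging any nonzero $X\in\mathcal{L}_{\Gamma_0}$ with $-w_0 X$ and using convexity produces a nonzero $Y\in\mathcal{L}_{\Gamma_0}\cap\mathfrak{b}$. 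Now if some $w\in W_g$ satisfied $\mathfrak{b}\subset w\cdot\mathfrak{a}_h$, this $Y$ would lie in $W_g\cdot\mathfrak{a}_h$, contradicting the Kobayashi--Benoist criterion for proper action.

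\emph{Sufficiency.} Assume $\mathfrak{b}\not\subset w\cdot\mathfrak{a}_h$ for each $w\in W_g$. Because a finite-dimensional real vector space is not a union of finitely many proper subspaces, $\mathfrak{b}\setminus\bigcup_w w\cdot\mathfrak{a}_h$ is a nonempty open subset of $\mathfrak{b}$; since $-w_0$ preserves the positive Weyl chamber, one can arrange that this open set meets the open chamber $\mathfrak{a}^{++}$, giving a regular $X\in\mathfrak{b}\cap\mathfrak{a}^{++}$ together with a narrow open cone $\mathcal{C}$ around $\mathbb{R}_+X$ disjoint from a conical neighborhood of $W_g\cdot\mathfrak{a}_h$. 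The density of loxodromic elements in $G$ supplies an $\mathbb{R}$-regular element $g_0$ with $\mu(g_0)$ a large positive multiple of $X$; the fact that $X\in\mathfrak{b}$ means $g_0$ and $g_0^{-1}$ have Cartan projections in the same direction and their attracting/repelling flags sit in the expected antipodal position. Conjugating $g_0$ by a generic element of $G$ yields a second element $g_1$ with $\mu(g_1)=\mu(g_0)$ whose flag data is transverse to that of $g_0$. A standard ping-pong argument on the full flag variety then produces, for large $N$, a free discrete subgroup $\Gamma:=\langle g_0^N,g_1^N\rangle$ of rank two. Benoist's Cartan-projection estimates for products of $(r,\varepsilon)$-proximal elements guarantee that $\mu(\Gamma)$ stays inside a slight widening of $\mathcal{C}$, hence is asymptotically disjoint from $W_g\cdot\mathfrak{a}_h$, and the Kobayashi--Benoist criterion yields the proper action.

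The hardest step is the sufficiency: one must not only construct a free subgroup, but control the Cartan projection of \emph{arbitrary} words in $g_0^N,g_1^N$. This requires Benoist's analysis of $(r,\varepsilon)$-proximal elements, and the choice $X\in\mathfrak{b}$ is essential, because it synchronizes the dynamics of $g$ and $g^{-1}$ and prevents products from drifting toward unrelated Weyl translates.
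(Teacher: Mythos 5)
The paper does not actually prove this statement --- it is quoted from Benoist's Annals paper --- so your proposal has to be measured against Benoist's original argument, whose overall strategy (Cartan projections, the opposition involution $\iota=-w_{0}$, limit cones, $(r,\varepsilon)$-proximal ping-pong) you have correctly identified. Your sufficiency sketch is essentially Benoist's construction and is sound modulo the product estimates you explicitly invoke: picking $X\in\mathfrak{b}\cap\text{\rm Int}(\mathfrak{a}^{+})$ outside the finite union of subspaces $\bigcup_{w}w\cdot\mathfrak{a}_{h}$ is legitimate (this union meets $\mathfrak{b}$ in proper subspaces precisely because no $w\cdot\mathfrak{a}_{h}$ contains $\mathfrak{b}$), and the estimate $\mu(\gamma)\in k(NX+O(1))$ for a word of length $k$ in the Schottky generators gives linear drift away from $W_{g}\mathfrak{a}_{h}$.

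The necessity direction, however, has a genuine gap. From $Y\in\mathcal{L}_{\Gamma_{0}}\cap\mathfrak{b}$ and $\mathfrak{b}\subset w\cdot\mathfrak{a}_{h}$ you conclude that properness is violated, but the Kobayashi--Benoist criterion for a discrete group says that $\mu(\Gamma)$ must eventually leave every bounded neighbourhood of $\mu(H)=W_{g}\mathfrak{a}_{h}\cap\overline{\mathfrak{a}^{+}}$; it says nothing about asymptotic directions alone. A sequence with $\mu(\gamma_{n})=nv+\sqrt{n}\,u$, where $v\in W_{g}\mathfrak{a}_{h}$, has limit direction $v$ inside $W_{g}\mathfrak{a}_{h}$ and yet drifts away from $\mu(H)$, so ``the limit cone of $\Gamma_{0}$ meets $W_{g}\mathfrak{a}_{h}$ nontrivially'' does not contradict properness. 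What Benoist actually proves is the stronger, bounded-distance statement: a non-virtually-abelian discrete $\Gamma$ contains elements $\gamma_{n}$ with $\|\mu(\gamma_{n})\|\to\infty$ and $d\bigl(\mu(\gamma_{n}),\mathfrak{b}\bigr)$ uniformly bounded. Concretely, for a proximal $\gamma$ and a generic $\delta$ in the free subgroup, the product estimate for transverse proximal elements gives $\mu\bigl(\gamma^{n}\delta\gamma^{-n}\delta^{-1}\bigr)=n\bigl(\lambda(\gamma)+\iota\lambda(\gamma)\bigr)+O(1)$, and $\lambda(\gamma)+\iota\lambda(\gamma)$ is a nonzero element of $\mathfrak{b}\cap\overline{\mathfrak{a}^{+}}$; only this version contradicts properness when $\mathfrak{b}\subset w\cdot\mathfrak{a}_{h}$. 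So the $(r,\varepsilon)$-proximal machinery you reserve for sufficiency is equally indispensable for necessity. A second, smaller issue: the Tits alternative produces a free subgroup only when $\Gamma$ is not virtually solvable, so a discrete, virtually solvable but non-virtually-abelian $\Gamma$ (e.g.\ a discrete Heisenberg group) escapes your argument entirely and must be handled by a separate structural argument, as Benoist does.
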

For example the space $SL(3,\mathbb{R})/SL(2,\mathbb{R})$ fulfills the condition C1 but does not fulfill the condition C2 and therefore the condition C3 (see Example 1 in \cite{ben}). Furthermore, consider the following example.
\begin{example}
Take $SO(4,4)/U$ where $U$ has a Lie algebra denoted by $\mathfrak{u}$ and $\mathfrak{u}\cong \mathbb{R}^{2}+\mathfrak{sl}(2,\mathbb{R}).$ The embedding $U\hookrightarrow G$ is induced by $\mathfrak{u} \hookrightarrow \mathfrak{so}(4,4)$ and is described in the Appendix.
\label{ex1}
\end{example}
This space fulfills the condition C2 but not C3. Thus for a space of reductive type we only have
$$C1 \Leftarrow C2 \Leftarrow C3.$$

The aim of this paper is to show that one can impose a condition on ``regularity'' of the embedding $H \hookrightarrow G$ so that conditions C2 and C3 become equivalent and close to C1. Let $\mathfrak{t}$ be a maximal abelian subspace of $\mathfrak{z}_{k}(\mathfrak{a}):= \{ Y\in \mathfrak{k} \ | \ \forall_{X\in \mathfrak{a}} \ [Y,X]=0  \}.$ Then
$$\mathfrak{j}:=\mathfrak{t}+\mathfrak{a}$$
is a Cartan subalgebra of $\mathfrak{g}$ called the \textbf{\textit{split Cartan subalgebra}} (for an arbitrary $\theta$ stable Cartan subalgebra $\mathfrak{w}$ the dimension $\text{\rm dim}(\mathfrak{a}\cap \mathfrak{w})$ is called the \textit{non-compact dimension} of the Cartan subalgebra, therefore a Cartan subalgebra is split if it is of maximal non-compact dimension). Analogously define a split Cartan subalgebra $\mathfrak{j}_{[h,h]}$ of $[\mathfrak{h},\mathfrak{h}].$ We propose the following definition.
\begin{definition}
The space $G/H$ is strongly regular if $\mathfrak{j}_{[h,h]}\subset \mathfrak{j} \subset \mathfrak{n}_{g}([\mathfrak{h} ,\mathfrak{h}])$ where $\mathfrak{n}_{g}([\mathfrak{h} ,\mathfrak{h} ]):=\{  Y\in \mathfrak{g} \ | \ \forall_{X\in [\mathfrak{h} ,\mathfrak{h} ]} \ [Y,X]\in [\mathfrak{h}, \mathfrak{h} ]  \}$ is a normalizer of $[\mathfrak{h} ,\mathfrak{h} ]$ in $\mathfrak{g}.$
\end{definition}
\begin{remark}
The space $G/H$ is strongly regular if and only if $\mathfrak{j}_{[h,h]}\subset \mathfrak{j}$ (up to conjugation) and $[\mathfrak{j},[\mathfrak{h},\mathfrak{h}]]\subset \mathfrak{h}.$
\end{remark}
Examples of strongly regular spaces include (but are not limited to) spaces of parabolic type (i.e. spaces where $H$ is a semisimple part of the Levi factor of any parabolic subgroup of $G$), spaces induced by regular complex spaces (i.e. spaces where $G$ and $H$ are split real forms of $G^{\mathbb{C}}$ and its regular subgroup $H^{\mathbb{C}},$ respectively), and some k-symmetric spaces (spaces where $H$ is an isotropy subgroup induced by an automorphism of $G$ of order $k$). For more details see Section \ref{sec:strreg}.
\begin{example}
Take $a,b,c \in \mathbb{N}$ so that $0<c<a<b.$ The following spaces are strongly regular (notation is close to \cite{ov2})

$$SL(b,\mathbb{R})/SL(a,\mathbb{R}), \ Sp(b,\mathbb{R})/Sp(a,\mathbb{R}), \ SO(b,b)/SO(a,a),$$
$$  SO(b,b+1)/SO(a,a+1), \ SO(b,b)/SL(a,\mathbb{R}), \ SO(b,b+1)/SL(a,\mathbb{R}), $$ 
$$SO(a,b)/SL(c,\mathbb{R}), \ SO(2a,2b)/SO(2c,2b), \ SO(2a,2b+1)/SO(2c,2b+1), $$
$$ SU(a,b)/SU(c,b), \ SU(a,b)/SL(c,\mathbb{C}), \ SU^{\ast}(2b+2)/SU^{\ast}(2a+2), $$ 
$$Sp(a,b)/SU^{\ast}(2c+2), \ E_{8}^{VIII}/SO(6,6), \ E_{8}^{IX}/E_{7}^{VII}, $$
$$ E_{6}^{III}/SU(1,6), \ F_{4}^{I}/Sp(3,\mathbb{R}), \ F_{4}^{I}/SL(3,\mathbb{R}).$$
\end{example}

Notice that the notion of strong regularity is a real analogue of the definition of regular subgroup in the complex case.

Define the \textbf{\textit{a-hyperbolic rank}} of a real reductive Lie algebra as
$$\text{\rm rank}_{a-hyp}\mathfrak{g}:=\text{\rm dim}\mathfrak{b},$$
where $\mathfrak{b}$ is given by (\ref{eq1}). The a-hyperbolic rank can be easily calculated using Table \ref{tab1} (see Section \ref{sec:ahyp}). 
\begin{definition}
We say that $G$ is of inner type if $\text{\rm rank}_{a-hyp}\mathfrak{g}=\text{\rm rank}_{\mathbb{R}}\mathfrak{g}.$
\end{definition}
\begin{remark}
The group $G$ is of inner type if and only if $\text{\rm rank}\mathfrak{k}=\text{\rm rank}\mathfrak{g}.$
\end{remark}

The main result of this paper is stated in the following theorem.
\begin{theorem}
Let $G/H$ be a strongly regular space such that $H$ is of inner type. Then
$$\text{\rm C2} \Longleftrightarrow \text{\rm C3} \Longleftrightarrow \text{\rm rank}_{a-hyp}\mathfrak{g}> \text{\rm rank}_{a-hyp}\mathfrak{h}.$$
\label{twg}
\end{theorem}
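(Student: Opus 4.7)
I aim to establish the cycle
$$C3 \Rightarrow C2 \Rightarrow \text{rank}_{a-hyp}\mathfrak{g} > \text{rank}_{a-hyp}\mathfrak{h} \Rightarrow C3.$$

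The implication $C3 \Rightarrow C2$ is immediate: every group $L$ locally isomorphic to $SL(2,\mathbb{R})$ contains a discrete, non-virtually abelian free subgroup (for instance a Schottky group), which automatically inherits the proper action on $G/H$ since propriety passes to closed subgroups.

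Under inner type of $H$ one has $\text{rank}_{a-hyp}\mathfrak{h} = \text{rank}_\mathbb{R}\mathfrak{h} = \dim \mathfrak{a}_h$, so the rank inequality to be proved reads $\dim\mathfrak{b} > \dim\mathfrak{a}_h$. For $C2 \Rightarrow \dim\mathfrak{b} > \dim\mathfrak{a}_h$ I use Benoist's Theorem \ref{ben}: $C2$ holds if and only if $\mathfrak{b} \not\subset w\mathfrak{a}_h$ for every $w \in W_g$. Suppose, for contradiction, $\dim\mathfrak{b} \leq \dim\mathfrak{a}_h$. Strong regularity supplies both $\mathfrak{a}_h \subset \mathfrak{a}$ and $[\mathfrak{j},[\mathfrak{h},\mathfrak{h}]] \subset [\mathfrak{h},\mathfrak{h}]$, which forces the restricted root system of $[\mathfrak{h},\mathfrak{h}]$ relative to $\mathfrak{a}_h$ to appear as a subsystem of that of $\mathfrak{g}$ relative to $\mathfrak{a}$; in particular the Weyl group $W_{[h,h]}$ embeds as a reflection subgroup of $W_g$ acting on $\mathfrak{a}$ and preserving $\mathfrak{a}_h$. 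Inner type of $H$ then supplies a representative $w_0^h$ of this subgroup acting as $-\text{id}$ on $\mathfrak{a}_h$. Combining $w_0^h$ with the longest element $w_0 \in W_g$ (which acts as $-\text{id}$ on $\mathfrak{b}$), and using the dimension hypothesis, I locate $w \in W_g$ so that $\mathfrak{b} \subset w\mathfrak{a}_h$, contradicting Benoist.

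For $\dim\mathfrak{b} > \dim\mathfrak{a}_h \Rightarrow C3$ I apply Kobayashi's properness criterion: a closed reductive subgroup $L \subset G$ acts properly on $G/H$ if and only if $\mathfrak{a}_L \cap w\mathfrak{a}_h = \{0\}$ for every $w \in W_g$, where $\mathfrak{a}_L$ is a maximal split abelian subspace of $\mathfrak{l} \cap \mathfrak{p}$. The inequality $\dim\mathfrak{b} > \dim\mathfrak{a}_h$ ensures that the finite union $\bigcup_{w \in W_g} w\mathfrak{a}_h$ cannot cover $\mathfrak{b}$, so there exists $X \in \mathfrak{b}$ with $\mathbb{R}X \cap w\mathfrak{a}_h = \{0\}$ for every $w$. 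Because $-w_0(X) = X$, the element $X$ is $K$-conjugate to $-X$; after a suitable rescaling to an integral element, $X$ completes to a normal $\mathfrak{sl}_2$-triple $(E,X,F) \subset \mathfrak{g}$ with $E+F, X \in \mathfrak{p}$ and $E-F \in \mathfrak{k}$ by Jacobson--Morozov combined with the standard construction of real $\mathfrak{sl}_2$-triples from hyperbolic elements of $\mathfrak{p}$. The generated subalgebra is isomorphic to $\mathfrak{sl}(2,\mathbb{R})$, and its connected subgroup $L$ has $\mathfrak{a}_L = \mathbb{R}X$; Kobayashi's criterion then yields the proper action.

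The main obstacle is the middle implication, specifically the structural step embedding $W_{[h,h]}$ into $W_g$ compatibly with $\mathfrak{a}_h \subset \mathfrak{a}$ and the bookkeeping that places $\mathfrak{b}$ inside some $w\mathfrak{a}_h$. Strong regularity is decisive here: it is exactly the hypothesis that makes the restricted root system of $[\mathfrak{h},\mathfrak{h}]$ behave as a subsystem of that of $\mathfrak{g}$, without which the Weyl-group comparison collapses. If a uniform argument proves elusive, I will conclude by classification, enumerating strongly regular pairs with $H$ of inner type and verifying the rank inequality case by case via Table \ref{tab1}.
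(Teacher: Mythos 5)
Your reduction of the theorem to a cycle is reasonable, and the easy pieces (C3 $\Rightarrow$ C2 via a Schottky subgroup; the use of Benoist's criterion; the covering-by-subspaces argument to find a generic point of $\mathfrak{b}$) are fine. But the implication you make the constructive heart of the proof, $\dim\mathfrak{b}>\dim\mathfrak{a}_h \Rightarrow \mathrm{C3}$, has a fatal gap: you take a \emph{generic} $X\in\mathfrak{b}$ avoiding $\bigcup_{w\in W_g}w\mathfrak{a}_h$ and claim that, after rescaling, it completes to an $\mathfrak{sl}(2,\mathbb{R})$-triple $(E,X,F)$. There is no such construction. Jacobson--Morozov produces a triple from a \emph{nilpotent} element, not from a hyperbolic one; the neutral (semisimple) elements of $\mathfrak{sl}_2$-triples form only finitely many rays up to conjugation (they are the characteristics of nilpotent orbits, with weighted Dynkin labels in $\{0,1,2\}$), so a hyperbolic element with generic eigenvalue ratios is not proportional to any of them. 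Hence a genericity/dimension argument cannot deliver an admissible $X$. Worse, your argument for this step uses neither strong regularity nor inner type of $H$, and the implication without those hypotheses is false: the paper's Appendix example $SO(4,4)/U$ has $\dim\mathfrak{b}=4>3=\dim\mathfrak{a}_u$ (so C2 holds) yet admits no proper $SL(2,\mathbb{R})$-action, precisely because every neutral element of every triple in $\mathfrak{so}(4,4)$ has its $W_g$-orbit meeting $\mathfrak{a}_u$. The paper's route avoids this by exhibiting a \emph{specific} triple, the principal one, whose neutral element $H$ satisfies $\alpha(H)=2$ for all simple roots and so lies in $\mathrm{Int}(\mathfrak{a}^+)$; strong regularity gives $\Sigma_h\subset\Sigma_g$, hence $W_h\subset W_g$, and inner type gives $\mathfrak{a}_h=\mathfrak{b}_h$, so $w_0w_0^h$ fixes $\mathfrak{a}_h$ pointwise and Lemma \ref{ref} yields a restricted root $\alpha$ with $\alpha(\mathfrak{a}_h)\equiv 0$; if some $w_2H$ lay in $\mathfrak{a}_h$, the reflection $s_{w_2^{-1}\alpha}$ would fix the regular element $H$, contradicting Theorem \ref{weyl}. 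This is where the hypotheses of the theorem actually do their work, and your proposal has no substitute for it.

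Your middle implication (C2 $\Rightarrow$ rank inequality) is also only a sketch, and it mislocates the role of strong regularity: in the paper this direction needs no regularity at all. One first arranges $\mathfrak{b}_h\subset\mathfrak{b}$ up to $W_g$-conjugation (every antipodal hyperbolic $H$-orbit extends to an antipodal hyperbolic $G$-orbit, so $\mathfrak{b}_h\subset\bigcup_w w\mathfrak{b}$, and the subspace-covering lemma picks out a single $w$); inner type then gives $\mathfrak{a}_h=\mathfrak{b}_h\subset\mathfrak{b}$, and if $\dim\mathfrak{b}\le\dim\mathfrak{a}_h$ one gets $\mathfrak{b}=\mathfrak{a}_h$, contradicting Benoist with $w=e$. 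No embedding of $W_{[h,h]}$ into $W_g$ or bookkeeping with $w_0,w_0^h$ is needed there, and a fallback ``proof by classification'' should not be necessary. In summary: the easy implications are fine, but the proposal as written does not prove the key implication giving C3, and that step cannot be repaired by genericity --- it requires the principal-$\mathfrak{sl}_2$ construction (or some equally specific triple) together with the strong regularity and inner-type hypotheses.
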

\begin{corollary}
If $G/H$ is a strongly regular homogeneous space such that $G$ and $H$ are of inner type then
$$\text{\rm C1} \Longleftrightarrow \text{\rm C2} \Longleftrightarrow \text{\rm C3} \Longleftrightarrow \text{\rm rank}_{\mathbb{R}}\mathfrak{g}> \text{\rm rank}_{\mathbb{R}}\mathfrak{h}.$$
\end{corollary}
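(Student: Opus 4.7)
The plan is to close the cycle $\text{C3} \Rightarrow \text{C2} \Rightarrow (\text{rank inequality}) \Rightarrow \text{C3}$. The first implication is immediate, since any torsion-free lattice in $SL(2,\mathbb{R})$ contains a non-abelian free subgroup and is in particular non-virtually abelian. For the remaining two implications I combine two structural tools: Kobayashi's properness criterion (a reductive subgroup $L \subset G$ acts properly on $G/H$ if and only if $\mathfrak{a}_L \cap W_g \cdot \mathfrak{a}_h = \{0\}$) and Benoist's Theorem \ref{ben}, which rewrites C2 as ``no $W_g$-translate of $\mathfrak{a}_h$ contains $\mathfrak{b}$''.

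For $\text{C2} \Rightarrow \text{\rm rank}_{a-hyp}\mathfrak{g} > \text{\rm rank}_{a-hyp}\mathfrak{h}$ I would argue by contrapositive. Because $H$ is of inner type, $\text{\rm rank}_{a-hyp}\mathfrak{h} = \text{\rm rank}_{\mathbb{R}}\mathfrak{h} = \dim \mathfrak{a}_h$, so the negated inequality becomes $\dim \mathfrak{a}_h \geq \dim \mathfrak{b}$, and it is enough to exhibit $w \in W_g$ with $w \cdot \mathfrak{a}_h \supset \mathfrak{b}$. This is where strong regularity enters: the inclusions $\mathfrak{j}_{[h,h]} \subset \mathfrak{j} \subset \mathfrak{n}_g([\mathfrak{h},\mathfrak{h}])$ ensure that each restricted root of $[\mathfrak{h},\mathfrak{h}]$ with respect to $\mathfrak{a}_h$ is the restriction of a root of $\mathfrak{g}$ with respect to $\mathfrak{a}$, so that $W_h$ sits inside $W_g$ as a subgroup fixing $\mathfrak{a}_h^\perp \cap \mathfrak{a}$ pointwise. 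Since $H$ is inner, the longest element $w_0^h \in W_h$ acts on $\mathfrak{a}_h$ as $-\mathrm{id}$, and so, viewed in $W_g$, it agrees with $-\mathrm{id}$ on $\mathfrak{a}_h$; playing $w_0^h$ and $w_0$ against each other, and exploiting that $\dim \mathfrak{a}_h \geq \dim \mathfrak{b}$, produces a $w\in W_g$ whose translate of $\mathfrak{a}_h$ swallows $\mathfrak{b}$, contradicting Theorem \ref{ben}.

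For the converse, assume $\dim \mathfrak{b} > \dim \mathfrak{a}_h$. A real vector space is not a finite union of proper subspaces, and each $\mathfrak{b} \cap w \cdot \mathfrak{a}_h$ (for $w \in W_g$) is proper, hence there exists $X \in \mathfrak{b}$ with $\mathbb{R} X \cap W_g \cdot \mathfrak{a}_h = \{0\}$. From such $X$ I build a subalgebra $\mathfrak{l} \cong \mathfrak{sl}(2,\mathbb{R})$ whose Cartan projection is exactly $\mathbb{R} X$: the identity $-w_0 X = X$ packaging $X$ into $\mathfrak{b}$ permits a Jacobson--Morozov-type construction of a triple $(H,E,-\theta E)$ in $\mathfrak{g}$ with $H \in \mathbb{R} X$, and the choice $F = -\theta E$ forces the resulting three-dimensional subalgebra to be the split real form $\mathfrak{sl}(2,\mathbb{R})$; Kobayashi's criterion applied to $\mathfrak{a}_\mathfrak{l} = \mathbb{R} X$ then yields C3. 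I expect the main obstacle to be the middle step of the previous paragraph, namely turning the dimension bound $\dim \mathfrak{a}_h \geq \dim \mathfrak{b}$ into the existence of a single $w \in W_g$ with $w \cdot \mathfrak{a}_h \supset \mathfrak{b}$: this is precisely what fails when strong regularity is dropped, as illustrated by Example \ref{ex1}, so the argument must use the embedding of root systems supplied by strong regularity in an essential way, and also genuinely needs $H$ inner in order to identify $\text{\rm rank}_{a-hyp}\mathfrak{h}$ with $\dim\mathfrak{a}_h$.
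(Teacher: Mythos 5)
There is a genuine gap, and it sits exactly where you predicted trouble would not be: in the implication $\text{\rm rank}$ inequality $\Rightarrow$ C3. You propose to pick $X\in\mathfrak{b}$ off the finite union $\bigcup_{w}w\cdot\mathfrak{a}_{h}$ and then run a ``Jacobson--Morozov-type construction'' of an $\mathfrak{sl}(2,\mathbb{R})$-triple $(H,E,-\theta E)$ with $H\in\mathbb{R}X$. No such construction exists: Jacobson--Morozov starts from a nilpotent $E$ and produces $H$, and the semisimple elements of $\mathfrak{sl}(2,\mathbb{R})$-triples form, up to $W_{g}$-conjugacy, a \emph{finite} set (weighted Dynkin diagrams), so their directions in $\mathfrak{a}$ cannot be prescribed to avoid a given union of subspaces. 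The Appendix of the paper is precisely a counterexample to your step in the absence of further hypotheses: for $SO(4,4)/U$ one has $\dim\mathfrak{b}=4>3=\dim\mathfrak{a}_{u}$, so lines in $\mathfrak{b}$ avoiding $W_{g}\mathfrak{a}_{u}$ abound, yet every semisimple element of every $\mathfrak{sl}(2,\mathbb{R})$-triple has its $W_{g}$-orbit meeting $\mathfrak{a}_{u}$ and C3 fails. The paper's Step 1 instead builds one specific triple --- the principal one, with $\alpha(H)=2$ for all simple $\alpha$, so $H\in\text{\rm Int}(\mathfrak{a}^{+})$ has trivial isotropy --- and rules out $W_{g}H\cap\mathfrak{a}_{h}\neq\emptyset$ by producing a root $\alpha$ with $\alpha(\mathfrak{a}_{h})\equiv 0$ (from the fact that $w_{0}w_{0}^{h}$ fixes $\mathfrak{a}_{h}=\mathfrak{b}_{h}$ pointwise, via Lemma \ref{ref}); this is where strong regularity and the inner-type hypothesis are actually spent, and it is not recoverable from a genericity argument.

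Two further points. First, your middle step (C2 $\Rightarrow$ rank inequality, by contrapositive) is only a sketch: the crux is to pass from ``each $X\in\mathfrak{a}_{h}$ lies in some $w\cdot\mathfrak{b}$'' (which follows because $w_{0}^{h}X=-X$ makes $GX$ antipodal, Lemma \ref{lma}) to ``all of $\mathfrak{a}_{h}$ lies in a \emph{single} $w\cdot\mathfrak{b}$,'' which the paper extracts from Lemma \ref{lemlem} on a subspace covered by finitely many subspaces; you flag this as the main obstacle but do not resolve it. Second, the corollary as stated also asserts the equivalence with C1 and with $\text{\rm rank}_{\mathbb{R}}\mathfrak{g}>\text{\rm rank}_{\mathbb{R}}\mathfrak{h}$, and your proposal never uses that $G$ (not just $H$) is of inner type nor invokes the Calabi--Markus phenomenon. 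The intended proof is much shorter than what you attempt: Theorem \ref{twg} gives $\text{\rm C2}\Leftrightarrow\text{\rm C3}\Leftrightarrow\text{\rm rank}_{a-hyp}\mathfrak{g}>\text{\rm rank}_{a-hyp}\mathfrak{h}$; inner type of $G$ and $H$ converts both a-hyperbolic ranks to real ranks; and Calabi--Markus identifies the resulting inequality with C1.
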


Taking into account the Calabi-Markus phenomenon we obtain the following characterization of strongly regular spaces $G/H$ for which $H$ is of inner type
\begin{itemize}
	\item $\text{\rm C1} \Longleftrightarrow \text{\rm rank}_{\mathbb{R}}\mathfrak{g}> \text{\rm rank}_{\mathbb{R}}\mathfrak{h},$
	\item $\text{\rm C2} \Longleftrightarrow \text{\rm rank}_{a-hyp}\mathfrak{g}> \text{\rm rank}_{\mathbb{R}}\mathfrak{h},$
	\item $\text{\rm C3} \Longleftrightarrow \text{\rm rank}_{a-hyp}\mathfrak{g}> \text{\rm rank}_{\mathbb{R}}\mathfrak{h}.$
\end{itemize}

\noindent
Notice that the assumption of $H$ being of inner type in Theorem \ref{twg} is important. Consider the space from Example \ref{ex1}. The space $SO(4,4)/U$ fulfills the condition of strong regularity: $\mathfrak{j}_{[u,u]}\subset \mathfrak{j}_{so(4,4)} \subset \mathfrak{n}_{so(4,4)}([\mathfrak{u} ,\mathfrak{u}]).$ But
$$1=\text{\rm rank}_{a-hyp}\mathfrak{u}< \text{\rm rank}_{\mathbb{R}}\mathfrak{u}=3.$$

\section{Preliminaries}

In this section we introduce a notation and properties used in the proof of Theorem \ref{twg}. We retain the notation from the previous section.

\subsection{Restricted roots and the Weyl group}\label{sec:resroot}

For more information on this subject please refer to Section 4 in Chapter 4 of \cite{ov}.

Let $\mathfrak{g}^{\mathbb{C}}$ and $\mathfrak{j}^{\mathbb{C}}$ be complexifications of the reductive Lie algebra $\mathfrak{g}$ and the split Cartan subalgebra $\mathfrak{j}=\mathfrak{t}+\mathfrak{a} \subset \mathfrak{g},$ respectively. Denote by $\Delta_{gc}$ the system of roots for $\mathfrak{g}^{\mathbb{C}}$ with respect to $\mathfrak{j}^{\mathbb{C}}.$ Then
$$\mathfrak{g}^{\mathbb{C}}=\mathfrak{j}^{\mathbb{C}}+\sum_{\beta \in \Delta_{gc}}\mathfrak{g}^{c}_{\beta}.$$
Consider a subspace
$$\mathfrak{j}^{\mathbb{C}}(\mathbb{R}):=i\mathfrak{t}+ \mathfrak{a} \subset \mathfrak{j}^{\mathbb{C}}$$
and let $\mathfrak{j}^{\mathbb{C}}(\mathbb{R})^{\ast}$ be the dual space with respect to the Killing form of $\mathfrak{g}^{\mathbb{C}}$. Denote by
\begin{equation}
r:\mathfrak{j}^{\mathbb{C}}(\mathbb{R})^{\ast} \rightarrow \mathfrak{a}^{\ast}
\label{eq2}
\end{equation}
the restriction map and define
$$\Sigma_{g} \cup \{ 0 \} := r(\Delta_{gc} \cup \{ 0 \} ).$$
Then
\begin{equation}
\mathfrak{g}=\mathfrak{j}+\sum_{\alpha \in \Sigma_{g}}\mathfrak{g}_{\alpha}, \ \mathfrak{g}_{\alpha}=\mathfrak{g} \cap \sum\limits_{\substack{\beta \in \Delta_{gc} \\ r(\beta)=\alpha}}\mathfrak{g}_{\beta}^{c}
\label{eq35}
\end{equation}
is a root space decomposition for $\mathfrak{g}.$ It follows that for $\mathfrak{g}_{0} := \mathfrak{j}$ and $\alpha_{1},\alpha_{2} \in \Sigma_{g} \cup \{ 0 \}$
\begin{equation}
[\mathfrak{g}_{\alpha_{1}},\mathfrak{g}_{\alpha_{2}}] \subset \mathfrak{g}_{\alpha_{1} +\alpha_{2}}, \ \ \text{\rm where} \ \ \mathfrak{g}_{\alpha_{1} +\alpha_{2}} \neq \{ 0 \} \ \ \text{\rm iff} \ \ \alpha_{1}+\alpha_{2} \in \Sigma_{g} \cup \{ 0 \}.
\label{eq3}
\end{equation}

The Weyl group $W_{g}$ of $\mathfrak{g}$ is the finite group of orthogonal transformations of $\mathfrak{a}$ generated by reflections by hyperplanes $C_{\alpha}:=\{ X\in \mathfrak{a} \ | \ \alpha (X)=0  \}$ for $\alpha \in \Sigma_{g}.$ One can prove the following.

\begin{proposition}[Proposition 4.2, Ch. 4 in \cite{ov}]
The group $W_{g}$ coincides with the group of transformations induced by automorphisms $Ad_{k}$ $(k\in N_{K}(\mathfrak{a}))$ and also with the group of transformations induced by automorphisms $Ad_{g}$ $(g\in N_{G}(\mathfrak{a})).$ Therefore
$$W_{g}\cong N_{K}(\mathfrak{a})/Z_{K}(\mathfrak{a})\cong N_{G}(\mathfrak{a})/Z_{K}(\mathfrak{a}).$$
\end{proposition}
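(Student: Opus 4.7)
The plan is to establish the two isomorphisms by proving the chain of inclusions
$$W_g \;\subseteq\; \mathrm{Ad}(N_K(\mathfrak{a}))|_\mathfrak{a} \;\subseteq\; \mathrm{Ad}(N_G(\mathfrak{a}))|_\mathfrak{a} \;\subseteq\; W_g,$$
after first reconciling the kernels. The trivial containment $N_K(\mathfrak{a}) \subseteq N_G(\mathfrak{a})$ descends to an injection of quotients once one observes the structural identity $Z_G(\mathfrak{a}) = Z_K(\mathfrak{a}) \cdot A$, which follows from the Iwasawa decomposition applied to the reductive subgroup $Z_G(\mathfrak{a})$. For any $g \in N_G(\mathfrak{a})$ the endomorphism $\mathrm{Ad}(g)|_\mathfrak{a}$ is orthogonal with respect to the Killing form and permutes the restricted roots $\Sigma_g$ (since it permutes the restricted root spaces), so both middle groups sit inside the finite subgroup of $O(\mathfrak{a})$ preserving $\Sigma_g$.

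The key step is to realize every reflection $s_\alpha \in W_g$ as $\mathrm{Ad}(k_\alpha)|_\mathfrak{a}$ for some $k_\alpha \in N_K(\mathfrak{a})$. Fix $\alpha \in \Sigma_g$ and choose $X \in \mathfrak{g}_\alpha \setminus \{0\}$ normalized so that $H_\alpha := [X,-\theta X] \in \mathfrak{a}$ is the restricted coroot, making $\{X,-\theta X, H_\alpha\}$ a standard $\mathfrak{sl}_2$-triple. Since $X + \theta X$ is $\theta$-fixed it lies in $\mathfrak{k}$, so
$$k_\alpha := \exp\!\bigl(\tfrac{\pi}{2}(X + \theta X)\bigr) \in K.$$
A direct computation inside the three-dimensional subalgebra $\mathbb{R}X + \mathbb{R}\theta X + \mathbb{R}H_\alpha$, reduced via the standard representation to an $SL(2,\mathbb{R})$ matrix identity, shows $\mathrm{Ad}(k_\alpha)H_\alpha = -H_\alpha$ while every element of $\mathfrak{a} \cap \ker\alpha$ commutes with $X + \theta X$ and is therefore fixed. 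Hence $k_\alpha \in N_K(\mathfrak{a})$ and $\mathrm{Ad}(k_\alpha)|_\mathfrak{a} = s_\alpha$, proving $W_g \subseteq \mathrm{Ad}(N_K(\mathfrak{a}))|_\mathfrak{a}$.

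It remains to show $\mathrm{Ad}(N_G(\mathfrak{a}))|_\mathfrak{a} \subseteq W_g$. Given $g \in N_G(\mathfrak{a})$, the orthogonal transformation $\mathrm{Ad}(g)|_\mathfrak{a}$ permutes Weyl chambers, and after multiplying by a suitable $k \in N_K(\mathfrak{a})$ coming from the previous step we may assume $g$ fixes a chosen positive chamber $\mathfrak{a}^+$. Then $g$ preserves $\Sigma_g^+$, hence normalizes $\mathfrak{n}^+ := \sum_{\alpha \in \Sigma_g^+}\mathfrak{g}_\alpha$, so $g$ normalizes the minimal parabolic subgroup $P = Z_G(\mathfrak{a}) N^+$. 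By the self-normalization of $P$ in $G$ we obtain $g \in P$; writing $g = zn$ with $z \in Z_G(\mathfrak{a})$ and $n \in N^+$, the hypothesis $\mathrm{Ad}(g)\mathfrak{a} = \mathfrak{a}$ together with $\mathrm{Ad}(z)\mathfrak{a} = \mathfrak{a}$ forces $n = e$, so $g \in Z_G(\mathfrak{a})$ and $\mathrm{Ad}(g)|_\mathfrak{a}$ is trivial.

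The main obstacle is the reflection construction of the second paragraph. One must choose the normalization of $X$ so that $\{X,-\theta X, H_\alpha\}$ is genuinely an $\mathfrak{sl}_2$-triple with $H_\alpha$ proportional to the coroot of $\alpha$; this rests on the standard but nontrivial fact that for $\alpha \in \Sigma_g$ the pairing $\mathfrak{g}_\alpha \times \mathfrak{g}_{-\alpha} \to \mathfrak{a}$ induced by the bracket is nondegenerate with image spanned by the coroot. Once this is in place the exponential identity reduces to an elementary $2 \times 2$ calculation, and the self-normalization of minimal parabolics invoked in the final paragraph is a classical structural fact.
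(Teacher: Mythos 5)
The paper offers no proof of this proposition --- it is quoted directly from Onishchik--Vinberg --- so there is no in-text argument to measure you against; what you have written is, in substance, the standard structural proof found in the cited reference (and in Knapp's treatment of $W(G,A)=W(\Sigma)$), and it is correct. The two halves are exactly the right ones: each reflection $s_\alpha$ is realized by $k_\alpha=\exp\bigl(\tfrac{\pi}{2}(X+\theta X)\bigr)$ built from the $\mathfrak{sl}_2$-triple attached to $\alpha$ (the normalization issue you flag is resolved by $[X,\theta X]=B(X,\theta X)A_\alpha$ with $B(X,\theta X)<0$, so the bracket is automatically a nonzero multiple of the coroot), and the reverse inclusion is obtained by using the first half to reduce to an element preserving the positive chamber and then trapping it in the minimal parabolic $P=Z_G(\mathfrak{a})N^{+}$. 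Two small points. First, at ``forces $n=e$'' you should record the one-line reason: for $n=\exp Y$ with $Y\in\mathfrak{n}^{+}$ and $H\in\mathfrak{a}$ one has $\mathrm{Ad}(n)H-H\in\mathfrak{n}^{+}$, so $\mathrm{Ad}(n)\mathfrak{a}=\mathfrak{a}$ gives $\mathrm{Ad}(n)H-H\in\mathfrak{n}^{+}\cap\mathfrak{a}=\{0\}$, i.e.\ $n$ centralizes $\mathfrak{a}$ and hence $n=e$. Second, your kernel identity $Z_G(\mathfrak{a})=Z_K(\mathfrak{a})\cdot A$ in fact shows that the second quotient in the displayed statement should read $N_G(\mathfrak{a})/Z_G(\mathfrak{a})$ rather than $N_G(\mathfrak{a})/Z_K(\mathfrak{a})$ (the latter is infinite for noncompact $G$); this is evidently a typo in the statement as printed, and your argument proves the corrected version.
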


\subsection{Strongly regular homogeneous spaces}\label{sec:strreg}

In this subsection we would like to introduce the notion of a \textit{\textbf{strongly regular}} homogeneous space $G/H.$ For the simplicity we will assume (for this subsection only) that $\mathfrak{h}$ is semisimple. 

Recall that the complex semisimple Lie subalgebra $\mathfrak{d}$ of complex semisimple Lie algebra $\mathfrak{e}$ is called \textit{\textbf{regular}} with respect to a Cartan subalgebra $\mathfrak{s}$ of $\mathfrak{e}$ if $\mathfrak{s} \subset \mathfrak{n}_{e}(\mathfrak{d}).$ In this setting if
$$\mathfrak{e}=\mathfrak{s}+ \sum_{\beta \in \Delta_{e}}\mathfrak{e}_{\beta}$$
then
$$\mathfrak{d}=\mathfrak{s}_{d}+\sum_{\beta \in \Delta_{d}}\mathfrak{e}_{\beta},$$
where $\Delta_{d}$ is a closed and symmetric subsystem of $\Delta_{e}$ and 
$$\mathfrak{s}_{d}:=\text{\rm Span}(\{ [\mathfrak{e}_{\beta},\mathfrak{e}_{-\beta}] \ | \ \beta \in \Delta_{d} \})$$
is a Cartan subalgebra of $\mathfrak{d}.$ (see Proposition 1.1, Ch. 6 in \cite{ov}). Recall that
\begin{definition}
A subsystem $\Delta_{d}$ of $\Delta_{e}$ is closed if for every $\beta_{1} , \beta_{2} \in \Delta_{d}$
$$\beta_{1}+ \beta_{2} \in \Delta_{e} \ \Longrightarrow \ \beta_{1}+\beta_{2} \in \Delta_{d}.$$
It is symmetric if for every $\beta \in \Delta_{d}$ we have $-\beta \in \Delta_{d} .$
\end{definition}

Let $\mathfrak{c}_{d}$ be the orthogonal complement to $\mathfrak{s}_{d}$ in $\mathfrak{s}$ with respect to the Killing form. Notice that
$$\forall_{\beta \in \Delta_{d}} \ \forall_{C\in \mathfrak{c}_{d}, X\in \mathfrak{g}_{\beta}} \ [C,X]=\beta (C) X=0,$$
as $C\in C_{\beta}.$ Therefore
\begin{equation}
[\mathfrak{c}_{d},\mathfrak{d}]=0.
\label{eq77}
\end{equation}

In the real case we cannot just assume that \textit{some} Cartan subalgebra of $\mathfrak{g}$ is contained in $\mathfrak{n}_{g}(\mathfrak{h})$ because not all Cartan subalgebras in $\mathfrak{g}$ are conjugate. Therefore we modify our requirement to
$$\mathfrak{j}_{h}\subset \mathfrak{j} \subset \mathfrak{n}_{g}(\mathfrak{h})$$
(recall that $\mathfrak{j}_{h}=\mathfrak{t}_{h}+\mathfrak{a}_{h}$ and $\mathfrak{j}=\mathfrak{t}+\mathfrak{a}$ are split-Cartan subalgebras of $\mathfrak{h}$ and $\mathfrak{g},$ respectively). Also let
$$\mathfrak{g}=\mathfrak{j}+\sum_{\alpha \in \Sigma_{g}}\mathfrak{g}_{\alpha}$$
be the root space decomposition for $\mathfrak{g}.$ In this setting we obtain the following result.
\begin{lemma}
Let $G/H$ be a strongly regular homogeneous space. Then
\begin{equation}
\mathfrak{h}=\mathfrak{j}_{h}+\sum_{\alpha \in \Sigma_{h}}\mathfrak{h}_{\alpha}
\label{eq4}
\end{equation}
is a root space decomposition for $\mathfrak{h}$ such that $\Sigma_{h} \subset \Sigma_{g}$ is a subsystem and so by (\ref{eq35}) we have $\mathfrak{h}_{\alpha}\subset \mathfrak{g}_{\alpha}$ for $\alpha \in \Sigma_{h}.$ Also 
$$\mathfrak{a}_{h}=\text{\rm Span}(\{  [\mathfrak{g}_{\alpha},\mathfrak{g}_{-\alpha}] \ | \ \alpha \in \Sigma_{h} \}).$$
\label{lg}
\end{lemma}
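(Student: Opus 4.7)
My plan hinges on two inputs: the strong regularity hypothesis, which forces $\mathfrak{h}$ to be $\mathrm{ad}(\mathfrak{a})$-invariant, and the complex regular-subalgebra statement (Proposition 1.1, Ch.~6 of \cite{ov}) recalled just above. Under the semisimplicity of $\mathfrak{h}$ assumed in this subsection, strong regularity reads $\mathfrak{j}_h\subset\mathfrak{j}\subset\mathfrak{n}_g(\mathfrak{h})$, so in particular $[\mathfrak{a},\mathfrak{h}]\subset\mathfrak{h}$.

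Since $\mathrm{ad}(H)$ is $\mathbb{R}$-diagonalisable on $\mathfrak{g}$ for every $H\in\mathfrak{a}\subset\mathfrak{p}$, the $\mathrm{ad}(\mathfrak{a})$-invariant subspace $\mathfrak{h}$ splits as a direct sum of $\mathfrak{a}$-weight spaces using the decomposition (\ref{eq35}) of $\mathfrak{g}$:
$$\mathfrak{h}=(\mathfrak{h}\cap\mathfrak{j})\oplus\bigoplus_{\alpha\in\Sigma_g}(\mathfrak{h}\cap\mathfrak{g}_\alpha).$$
Set $\mathfrak{h}_\alpha:=\mathfrak{h}\cap\mathfrak{g}_\alpha$ and $\Sigma_h:=\{\alpha\in\Sigma_g\mid\mathfrak{h}_\alpha\neq 0\}$; the inclusions $\mathfrak{h}_\alpha\subset\mathfrak{g}_\alpha$ are then built in, and symmetry $-\Sigma_h=\Sigma_h$ follows from $\theta(\mathfrak{h})=\mathfrak{h}$ combined with $\theta(\mathfrak{g}_\alpha)=\mathfrak{g}_{-\alpha}$ (as $\theta|_\mathfrak{a}=-\mathrm{id}$).

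The identification $\mathfrak{h}\cap\mathfrak{j}=\mathfrak{j}_h$ is the main step where the Cartan decomposition enters. The inclusion $\supset$ is strong regularity. Conversely, take $X=X_t+X_a\in\mathfrak{h}\cap\mathfrak{j}$ with $X_t\in\mathfrak{t}$ and $X_a\in\mathfrak{a}$. Since $\theta(\mathfrak{h})=\mathfrak{h}$, also $\theta X=X_t-X_a\in\mathfrak{h}$, so $X_t,X_a\in\mathfrak{h}$ separately. Then $X_a\in\mathfrak{a}\cap\mathfrak{p}_h$ commutes with the maximal abelian subspace $\mathfrak{a}_h\subset\mathfrak{p}_h$, forcing $X_a\in\mathfrak{a}_h$. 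Likewise $X_t\in\mathfrak{t}\cap\mathfrak{k}_h$ commutes with $\mathfrak{a}_h$ (because $\mathfrak{t}\subset\mathfrak{z}_k(\mathfrak{a})$) and with $\mathfrak{t}_h$ (because $\mathfrak{t}_h\subset\mathfrak{t}$ by strong regularity), so $X_t\in\mathfrak{z}_h(\mathfrak{j}_h)=\mathfrak{j}_h$, and being in $\mathfrak{k}$ forces $X_t\in\mathfrak{t}_h$. Hence $X\in\mathfrak{j}_h$.

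For the subsystem property of $\Sigma_h$ and the coroot formula $\mathfrak{a}_h=\mathrm{Span}\{[\mathfrak{g}_\alpha,\mathfrak{g}_{-\alpha}]\mid\alpha\in\Sigma_h\}$ I would complexify. Strong regularity lifts to $\mathfrak{j}^\mathbb{C}\subset\mathfrak{n}_{\mathfrak{g}^\mathbb{C}}(\mathfrak{h}^\mathbb{C})$, so $\mathfrak{h}^\mathbb{C}$ is a regular semisimple subalgebra of $\mathfrak{g}^\mathbb{C}$ with Cartan subalgebra $\mathfrak{j}_h^\mathbb{C}\subset\mathfrak{j}^\mathbb{C}$. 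Proposition 1.1, Ch.~6 of \cite{ov} then provides a closed symmetric subsystem $\Delta_h\subset\Delta_{gc}$ with $\mathfrak{h}^\mathbb{C}=\mathfrak{j}_h^\mathbb{C}+\sum_{\beta\in\Delta_h}\mathfrak{g}_\beta^c$ and $\mathfrak{j}_h^\mathbb{C}=\mathrm{Span}\{[\mathfrak{g}_\beta^c,\mathfrak{g}_{-\beta}^c]:\beta\in\Delta_h\}$. Grouping $\beta\in\Delta_h$ by their restriction $r(\beta)=\alpha\in\Sigma_h$ and passing to real points identifies $\Sigma_h=r(\Delta_h)\setminus\{0\}$, transports closure from $\Delta_h$ to $\Sigma_h$, and, by matching the $\mathfrak{a}$-components of the complex coroots fibrewise, realises $\mathrm{Span}\{[\mathfrak{g}_\alpha,\mathfrak{g}_{-\alpha}]\mid\alpha\in\Sigma_h\}$ as $\mathfrak{a}_h$. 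The hardest step, I expect, is exactly this last matching: the restriction map $r:\Delta_{gc}\to\Sigma_g\cup\{0\}$ is typically non-injective, and each coroot $[\mathfrak{g}_\beta^c,\mathfrak{g}_{-\beta}^c]$ can split between $\mathfrak{a}$ and $i\mathfrak{t}$, so one must verify carefully that summing over each fibre $r^{-1}(\alpha)\cap\Delta_h$ produces precisely the $\mathfrak{a}_h$-coroots, with the $i\mathfrak{t}$-contributions reassembling into $\mathfrak{t}_h$ rather than leaking outside $\mathfrak{j}_h$.
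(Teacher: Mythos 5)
Your proposal takes a genuinely different route from the paper: you try to stay real (an $\mathrm{ad}(\mathfrak{a})$-weight splitting of $\mathfrak{h}$ plus a $\theta$/centralizer identification of $\mathfrak{h}\cap\mathfrak{j}$), whereas the paper complexifies immediately, applies Proposition 1.1, Ch.~6 of \cite{ov} to write $\mathfrak{h}^{\mathbb{C}}=\hat{\mathfrak{j}}_{h}^{\mathbb{C}}+\sum_{\beta\in\Delta_{hc}}\mathfrak{g}^{c}_{\beta}$, and spends essentially all of its effort showing that the Cartan subalgebra $\hat{\mathfrak{j}}_{h}^{\mathbb{C}}\subset\mathfrak{j}^{\mathbb{C}}$ produced there may be replaced by $\mathfrak{j}_{h}^{\mathbb{C}}$, via a conjugating element of $H^{\mathbb{C}}$ which is shown (using $[\mathfrak{c}_{d},\mathfrak{d}]=0$) to normalize $\mathfrak{j}^{\mathbb{C}}$ and hence to lie in $W_{gc}$. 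The main gap in your version is the very first display: $\mathrm{ad}(\mathfrak{a})$-invariance of $\mathfrak{h}$ only yields a splitting whose zero part is $\mathfrak{h}\cap\mathfrak{z}_{g}(\mathfrak{a})$, not $\mathfrak{h}\cap\mathfrak{j}$, because the zero weight space of $\mathrm{ad}(\mathfrak{a})$ on $\mathfrak{g}$ is $\mathfrak{z}_{g}(\mathfrak{a})=\mathfrak{z}_{k}(\mathfrak{a})+\mathfrak{a}$, which strictly contains $\mathfrak{j}=\mathfrak{t}+\mathfrak{a}$ whenever $\mathfrak{z}_{k}(\mathfrak{a})$ is nonabelian. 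Your nice centralizer argument proves $\mathfrak{h}\cap\mathfrak{j}=\mathfrak{j}_{h}$, but it does not extend to $\mathfrak{h}\cap\mathfrak{z}_{g}(\mathfrak{a})$: for $X_{k}\in\mathfrak{h}\cap\mathfrak{z}_{k}(\mathfrak{a})$ there is no reason that $X_{k}$ commutes with $\mathfrak{t}_{h}$ (your argument used that $\mathfrak{t}$ is abelian; $\mathfrak{z}_{k}(\mathfrak{a})$ is not), so you cannot conclude $X_{k}\in\mathfrak{z}_{h}(\mathfrak{j}_{h})=\mathfrak{j}_{h}$. This is exactly the point where $\mathfrak{a}$-weights are too coarse and one needs the full $\mathfrak{j}^{\mathbb{C}}$-root decomposition, i.e.\ the complexified regular-subalgebra picture the paper works in.

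Two further remarks. First, when you do complexify, you assert without argument that the Cartan subalgebra furnished by Proposition 1.1, Ch.~6 of \cite{ov} is $\mathfrak{j}_{h}^{\mathbb{C}}$; that identification is precisely the nontrivial content of the paper's proof (its $W_{gc}$-conjugation step), so it cannot simply be assumed. It is fixable cheaply from what you already did: the Cartan subalgebra in that proposition equals $\mathfrak{h}^{\mathbb{C}}\cap\mathfrak{j}^{\mathbb{C}}=(\mathfrak{h}\cap\mathfrak{j})^{\mathbb{C}}$, and your computation $\mathfrak{h}\cap\mathfrak{j}=\mathfrak{j}_{h}$ then gives the identification directly, arguably more simply than the paper's conjugation argument --- but you never make this link. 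Second, the subsystem property and the formula $\mathfrak{a}_{h}=\mathrm{Span}(\{[\mathfrak{g}_{\alpha},\mathfrak{g}_{-\alpha}] \mid \alpha\in\Sigma_{h}\})$ are deferred to the ``fibrewise matching'' over the restriction map which you yourself flag as unverified; the paper treats this descent as immediate from the definition of the restricted root system, but in your write-up it remains an acknowledged hole rather than a proof. So the approach is salvageable, but as written both the real decomposition step and the concluding complex-to-real step are genuine gaps.
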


\begin{proof}
First notice that $\mathfrak{h}^{\mathbb{C}}$ is a regular subalgebra of $\mathfrak{g}^{\mathbb{C}}$ with respect to $\mathfrak{j}^{\mathbb{C}}.$ Therefore
$$\mathfrak{h}^{\mathbb{C}}=\hat{\mathfrak{j}}^{\mathbb{C}}_{h}+\sum_{\beta \in \Delta_{hc}}\mathfrak{g}_{\beta}^{c},$$
where $\Delta_{hc}$ is a subsystem of $\Delta_{gc}$ and $\hat{\mathfrak{j}}_{h}^{\mathbb{C}} \subset \mathfrak{j}^{\mathbb{C}}$ is some Cartan subalgebra of $\mathfrak{h}^{\mathbb{C}}.$ It follows from the assumption of strong regularity that $\mathfrak{j}_{h}^{\mathbb{C}} \subset \mathfrak{j}^{\mathbb{C}}.$ Also $\hat{\mathfrak{j}}_{h}^{\mathbb{C}}$ and $\mathfrak{j}_{h}^{\mathbb{C}}$ as Cartan subalgebras of the complex semisimple lie algebra $\mathfrak{h}^{\mathbb{C}}$ are conjugate by an element $w \in H^{\mathbb{C}} \subset G^{\mathbb{C}}.$ 

Assume for the moment that $w\in W_{gc}.$ It follows that for $w\hat{\mathfrak{j}}_{h}^{\mathbb{C}}=\mathfrak{j}_{h}^{\mathbb{C}}$ we have
$$\mathfrak{h}^{\mathbb{C}} = w\mathfrak{h}^{\mathbb{C}}=w\hat{\mathfrak{j}}_{h}^{\mathbb{C}}+w\sum_{\beta \in \Delta_{hc}}\mathfrak{g}_{\beta}^{c}=\mathfrak{j}_{h}^{\mathbb{C}}+\sum_{\beta \in w(\Delta_{hc})}\mathfrak{g}_{\beta}^{c}.$$
Thus we may assume that 
$$\hat{\mathfrak{j}_{h}^{\mathbb{C}}}=\mathfrak{j}_{h}^{\mathbb{C}}$$
and therefore
$$\mathfrak{h}^{\mathbb{C}}=\mathfrak{j}_{h}^{\mathbb{C}}+\sum_{\beta \in \Delta_{hc}}\mathfrak{g}_{\beta}^{c}.$$
Now the lemma follows from the definition of the restricted root system and the restriction map (defined by (\ref{eq2})).

We only need to prove that $w\in W_{gc}.$ Let $\mathfrak{c}$ be the orthogonal complement of $\hat{\mathfrak{j}}_{h}^{\mathbb{C}}$ in $\mathfrak{j}^{\mathbb{C}}.$ We have
$$\mathfrak{j}^{\mathbb{C}}=\hat{\mathfrak{j}}_{h}^{\mathbb{C}} + \mathfrak{c}$$
and thus by (\ref{eq77})
$$w\mathfrak{j}^{\mathbb{C}}=w\hat{\mathfrak{j}}_{h}^{\mathbb{C}} + w\mathfrak{c}=\mathfrak{j}_{h}^{\mathbb{C}} + \mathfrak{c} \subset \mathfrak{j}^{\mathbb{C}}.$$
as $w\in H^{\mathbb{C}}.$ Therefore $\mathfrak{j}^{\mathbb{C}}_{h}$ and $\hat{\mathfrak{j}}_{h}^{\mathbb{C}}$ are conjugate by an element of $N_{G^{\mathbb{C}}}(\mathfrak{j}^{\mathbb{C}}).$ Without loss of generality we may assume that this element belongs to $W_{gc}$ because
$$W_{gc}\cong N_{G^{\mathbb{C}}}(\mathfrak{j}^{\mathbb{C}})/Z_{G^{\mathbb{C}}}(\mathfrak{j}^{\mathfrak{j}^{\mathbb{C}}}).$$
\end{proof}

It follows from (\ref{eq35}) and (\ref{eq3}) that if $\mathfrak{h}$ is given by (\ref{eq4}) then it fulfills the condition of strong regularity. Therefore Lemma \ref{lg} can be used to characterize strongly regular homogeneous spaces with semisimple $\mathfrak{h}.$

It also follows from Lemma \ref{lg} how one can construct some easy examples of strongly regular homogeneous spaces. For instance

\begin{enumerate}
	\item If $\mathfrak{g}$ and $\mathfrak{h}$ are split real forms of $\mathfrak{g}^{\mathbb{C}}$ and its regular subalgebra $\mathfrak{h}^{\mathbb{C}},$ respectively. Then the root system of $\mathfrak{h}^{\mathbb{C}}$ can be constructed from the extended Dynkin diagram of $\mathfrak{g}^{\mathbb{C}}.$ Since for the split real form we can identify $\Delta_{gc}$ with $\Sigma_{g}$ this yields a class of examples of strongly regular homogeneous spaces (see \cite{ov}, Chapter 6, Section 1.1).
	\item Also if $\mathfrak{o}$ is a centralizer in $\mathfrak{g}$ of any subspace in $\mathfrak{a}$ then $\mathfrak{h}:=[\mathfrak{o},\mathfrak{o}]$ induces a strongly regular homogeneous space. In this case the Satake diagram of $\mathfrak{h}$ can be obtain from the Satake diagram of $\mathfrak{g}$ by deleting any subset (different subsets correspond to different centralizers) of white vertexes and all connections and arrows which lead to this subset of white vertexes (see Theorem 1.6 and comments after, Ch. 6 in \cite{ov}).
\end{enumerate}

A more nontrivial example (as we may obtain $\mathfrak{h}_{\alpha} \subsetneq \mathfrak{g}_{\alpha}$) can be constructed as follows. Take a connected semisimple real Lie group $M$ of inner type with the Lie algebra $\mathfrak{m}$. Then there exists a closed subgroup $\tilde{U} \subset M$ locally isomorphic to $SL(2,\mathbb{R}) \times ... \times SL(2,\mathbb{R})=[SL(2,\mathbb{R})]^{\text{\rm rank}\mathfrak{m}}.$ The Lie algebra of $\tilde{U}$ is induced by a set of $\text{\rm rank}\mathfrak{m}$ orthogonal restricted roots generating commuting $\mathfrak{sl}(2,\mathbb{R})$ subalgebras (such set of roots exists since $M$ is of inner type). The resulting homogeneous space $M/\tilde{U}$ is strongly regular.

Also some \textit{k-symmetric spaces} are strongly regular. For instance it is easy to see that a 3-symmetric space $SO(n,n+3)/U(1)\times SO(n,n+1),$ $n\geq 1$ is strongly regular (see \cite{g1} for a classification of 3-symmetric spaces).

\subsection{Finite groups of reflections}\label{sec:ref}

For a more detailed treatment of this subject please refer to Chapter I.1 in \cite{hum}. Recall that we denote by $W_{g}$ the finite reflection group generated by roots in $\Sigma_{g}.$ In more detail if $\alpha \in \Sigma_{g}$ then let $s_{\alpha}$ be a reflection in $\mathfrak{a}$ through a hyperplane
\begin{equation}
C_{\alpha}= \{ X\in \mathfrak{a} \ | \ \alpha (X)=0 \}
\label{eq5}
\end{equation}
with respect to the scalar product given by the restriction of the Killing form of $\mathfrak{g}$ to $\mathfrak{a}$. Let $\Pi_{g} \subset \Sigma_{g}$ be a subset of simple roots. Then $W_{g}$ is generated by \textit{simple reflections} $s_{\alpha}$ for $\alpha \in \Pi_{g}.$ We also have $W_{g}(\Sigma_{g})=\Sigma_{g}.$

For an arbitrary $w\in W_{g}$ let $r$ be the smallest number such that $w=s_{\alpha_{1}}...s_{\alpha_{r}}$ is a product of simple reflections.

\begin{definition}
The number $r$ is called the length of $w$ and is denoted by $l(w):=r.$ By definition $l(e)=0.$
\end{definition}
Using the above definition we can describe a unique element of the Weyl group called the longest element of $W_{g}.$

\begin{definition}[see section 1.8 in \cite{hum}]
The longest element of the Weyl group is the element $w_{0}\in W_{g}$ such that $l(w_{0}w)=l(w_{0})-l(w)$ for any $w\in W_{g}.$
\end{definition}
As a consequence we obtain the following property.
\begin{fact}
The element $w_{0}$ is an involution (that is $w_{0}^{2}=id$). Also $w_{0}(\Pi_{g})=-\Pi_{g}.$
\end{fact}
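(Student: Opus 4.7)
The plan is to exploit the defining identity $l(w_0 w) = l(w_0) - l(w)$ twice, for two different choices of $w$, and to invoke the standard dictionary between length and root-inversions for a finite reflection group (this is exactly the material of Humphreys Chapter I.1 cited just before the statement).

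For the involution claim, I would simply set $w = w_0$ in the definition, which gives $l(w_0^2) = l(w_0) - l(w_0) = 0$. Since the only element of $W_g$ of length $0$ is the identity, this immediately yields $w_0^2 = \mathrm{id}$.

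For the claim $w_0(\Pi_g) = -\Pi_g$, I would first argue that $w_0$ has maximum possible length. From $l(w_0 w) = l(w_0) - l(w) \geq 0$ the inequality $l(w) \leq l(w_0)$ holds for every $w \in W_g$, so $w_0$ has maximum length among all elements of $W_g$. Now I would invoke the standard fact that
\[
l(w) \;=\; \#\{\alpha \in \Sigma_g^+ \mid w(\alpha) \in \Sigma_g^-\},
\]
so in particular $l(w) \leq |\Sigma_g^+|$ with equality iff $w(\Sigma_g^+) = \Sigma_g^-$. Because an element of $W_g$ carrying $\Sigma_g^+$ onto $\Sigma_g^-$ always exists (for example the product of simple reflections associated with any reduced expression of a maximal-length element, whose existence is guaranteed by finiteness of $W_g$), the maximum is attained, and hence $w_0$ itself must send $\Sigma_g^+$ onto $\Sigma_g^-$.

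Finally I would close the argument as follows: $\Pi_g$ is, by definition, the unique base of simple roots associated with the positive system $\Sigma_g^+$. Since $w_0 \in W_g$ permutes $\Sigma_g$ and respects the base/positive-system correspondence, $w_0(\Pi_g)$ is a base of simple roots for the positive system $w_0(\Sigma_g^+) = \Sigma_g^-$. The unique such base is $-\Pi_g$, giving $w_0(\Pi_g) = -\Pi_g$. No real obstacle is expected here; the only slightly nontrivial ingredient is the length-counts-inversions formula, which is standard in the Humphreys reference the author points to.
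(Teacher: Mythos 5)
Your argument is correct and follows the standard Humphreys-style route; note that the paper itself offers no proof of this Fact at all (it is stated as a consequence of the definition, with a pointer to Section 1.8 of the Humphreys reference), so there is no internal argument to diverge from. The involution step (take $w=w_{0}$ in $l(w_{0}w)=l(w_{0})-l(w)$ to get $l(w_{0}^{2})=0$) and the closing step (the image $w_{0}(\Pi_{g})$ is the unique simple system of the positive system $w_{0}(\Sigma_{g}^{+})=\Sigma_{g}^{-}$, namely $-\Pi_{g}$) are exactly right. The one wobble is your parenthetical justification for the existence of an element carrying $\Sigma_{g}^{+}$ onto $\Sigma_{g}^{-}$: ``the product of simple reflections in a reduced expression of a maximal-length element'' is just that maximal-length element itself, so as written you are assuming the very property you need. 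The standard one-line patch: if $w$ has maximal length and $w(\alpha)\in\Sigma_{g}^{+}$ for some $\alpha\in\Pi_{g}$, then the inversion formula gives $l(ws_{\alpha})=l(w)+1$, contradicting maximality; hence a maximal-length element sends every simple root, and therefore every positive root, into $\Sigma_{g}^{-}$. (Alternatively, simple transitivity of $W_{g}$ on Weyl chambers yields an element taking $\mathfrak{a}^{+}$ to $-\mathfrak{a}^{+}$, which necessarily maps $\Sigma_{g}^{+}$ onto $\Sigma_{g}^{-}$.) A last cosmetic remark: since restricted root systems can be non-reduced (type $BC$), the inversion count should be taken over indivisible roots, but this changes nothing in the conclusion $w_{0}^{2}=id$, $w_{0}(\Pi_{g})=-\Pi_{g}$.
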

Define \textbf{\textit{the fundamental Weyl chamber}} $\mathfrak{a}^{+}$ as
$$\mathfrak{a}^{+}=\{ X\in \mathfrak{a} \ | \ \forall_{\alpha \in \Pi_{g}} \ \alpha (X) \geq 0   \}.$$
Then $w_{0}(\mathfrak{a}^{+})=-\mathfrak{a}^{+}$ and
\begin{theorem}[c.f. Theorem 1.12 in \cite{hum}]
The subset $\mathfrak{a}^{+}$ is a fundamental domain of the action of $W_{g}$ on $\mathfrak{a}.$ Also
\begin{enumerate}
	\item if $wX_{1}=X_{2}$ for some $X_{1},X_{2}\in\mathfrak{a}^{+}$ and $w\in W_{g}$ then $X_{1}=X_{2}.$ Moreover, if $X_{3}\in \text{\rm Int}(\mathfrak{a}^{+}):=\{ X\in \mathfrak{a} \ | \ \forall_{\alpha \in \Pi_{g}} \ \alpha (X) > 0  \}$ then the isotropy group of $X_{3}$ is trivial.
	\item if $U\subset V$ is a subset then the subgroup $W_{U}\subset W_{g}$ fixing $U$ pointwise is generated by those reflections $s_{\alpha}$ which are contained in $W_{U}.$
\end{enumerate}
\label{weyl}
\end{theorem}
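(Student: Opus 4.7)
The plan is to establish $\mathfrak{a}^+$ as a strict fundamental domain in three increments: first prove that every $W_g$-orbit in $\mathfrak{a}$ meets $\mathfrak{a}^+$, then upgrade this to uniqueness on $\mathfrak{a}^+$ together with the triviality of isotropy in the interior, and finally handle the structure of pointwise stabilizers.

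For existence I would fix a regular vector $\rho$ in $\text{Int}(\mathfrak{a}^+)$ so that $\alpha(\rho)>0$ for every $\alpha\in\Pi_g$, and use the restriction of the Killing form to $\mathfrak{a}$ as inner product. Given $X\in\mathfrak{a}$, the orbit $W_gX$ is finite, so I can select $w\in W_g$ maximizing $\langle wX,\rho\rangle$. The reflection formula gives $\langle s_\alpha(wX),\rho\rangle=\langle wX,\rho\rangle-c_\alpha\,\alpha(wX)$ with $c_\alpha>0$; maximality then forces $\alpha(wX)\geq 0$ for every $\alpha\in\Pi_g$, placing $wX$ in $\mathfrak{a}^+$.

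For uniqueness I would argue by induction on $l(w)$. Suppose $wX_1=X_2$ with $X_1,X_2\in\mathfrak{a}^+$ and $l(w)\geq 1$. The standard identity $l(w)=\#\{\beta\in\Sigma_g^+:w\beta\in -\Sigma_g^+\}$ produces a simple root $\alpha\in\Pi_g$ with $w^{-1}\alpha\in -\Sigma_g^+$; then $\alpha(X_2)=(w^{-1}\alpha)(X_1)\leq 0$ combined with $\alpha(X_2)\geq 0$ gives $\alpha(X_2)=0$, so $s_\alpha$ fixes $X_2$. Replacing $w$ by $s_\alpha w$ drops the length by one and preserves $wX_1=X_2$, and the inductive hypothesis yields $X_1=X_2$. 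Applying the same reasoning to $X_3\in\text{Int}(\mathfrak{a}^+)$ with $wX_3=X_3$ shows that $l(w)\geq 1$ would force $\alpha(X_3)=0$ for some simple $\alpha$, contradicting interiority; hence the isotropy of $X_3$ is trivial.

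Part (2) is where I expect the main obstacle, because a subset $U\subset\mathfrak{a}$ need not lie in $\overline{\mathfrak{a}^+}$ and a general element of $W_U$ may a priori involve reflections through non-simple roots. My plan is first to reduce to the pointwise case: the key lemma to prove (following the Humphreys argument, essentially the exchange condition) is that for $X\in\overline{\mathfrak{a}^+}$ the stabilizer $W_X$ is generated by the simple reflections $\{s_\alpha:\alpha\in\Pi_g,\ \alpha(X)=0\}$, proved by yet another induction on length: if $w$ fixes $X$ and $l(w)\geq 1$, the argument from part (1) produces a simple $\alpha$ vanishing on $X$ with $l(s_\alpha w)<l(w)$, and the inductive step closes. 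To descend from the pointwise statement to a general subset $U$, I would translate $U$ by a suitable $w_0\in W_g$ so that some chosen point of $U$ sits in $\mathfrak{a}^+$ (using part (1)), then use the fact that an intersection $\bigcap_{X\in U}W_X$ of parabolic-type reflection subgroups of $W_g$ is itself generated by the reflections it contains. The delicate technical point that I expect will require the most care is verifying this last intersection property for Coxeter reflection subgroups, as the naive argument that ``intersections of subgroups generated by reflections are generated by reflections'' is precisely what needs the structure theory of Coxeter groups rather than being formal.
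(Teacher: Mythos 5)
The paper does not actually prove this theorem; it is quoted from Humphreys (Theorem 1.12 of \cite{hum}), so your attempt has to be measured against that standard proof. Your existence step (maximizing $\langle wX,\rho\rangle$ over the orbit for a regular dominant $\rho$), your induction on $l(w)$ for the uniqueness statement and for the triviality of the isotropy group of a point of $\text{\rm Int}(\mathfrak{a}^{+})$, and your single-point lemma that the stabilizer of $X\in\mathfrak{a}^{+}$ is generated by the simple reflections $s_{\alpha}$ with $\alpha(X)=0$ are all correct and coincide with the classical argument.

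The genuine gap is in part (2). Since $W_{U}=\bigcap_{X\in U}W_{X}$, the ``intersection property for parabolic-type reflection subgroups'' that you defer to the end is not an auxiliary fact but a restatement of the very claim you are proving, so invoking it is circular, and you explicitly leave it unproved. Moreover, conjugating by one Weyl group element so that a single chosen point of $U$ lands in $\mathfrak{a}^{+}$ does not normalize the remaining points of $U$, so by itself it does not reduce (2) to the pointwise case. The standard completion (and the one in Humphreys) is an induction on the order of the reflection group: if every point of $U$ is fixed by all of $W_{g}$ there is nothing to prove; otherwise choose $\lambda\in U$ not fixed by all of $W_{g}$ and conjugate it into $\mathfrak{a}^{+}$. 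Your single-point lemma then shows that $W_{\lambda}$ is a proper subgroup generated by reflections, hence itself a finite reflection group acting on $\mathfrak{a}$ with root system $\{\alpha\in\Sigma_{g}\ |\ \alpha(\lambda)=0\}$ and its own simple system and chamber. Since $W_{U}\subseteq W_{\lambda}$, the inductive hypothesis applied to $W_{\lambda}$ and the subset $U$ yields that $W_{U}$ is generated by the reflections it contains. Replacing your final step by this induction makes the proof complete; everything before it can stand as written.
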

We will also need the following lemma.
\begin{lemma}
Let $e\neq w\in W_{g}$ and $S\subset \mathfrak{a}$ be a set that $w$ fixes pointwise. Then there exists $\alpha \in \Sigma_{g}$ such that $S\subset C_{\alpha}$ (where $C_{\alpha}$ is defined by (\ref{eq5})).
\label{ref}
\end{lemma}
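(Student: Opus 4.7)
The plan is to deduce the lemma directly from part~(2) of Theorem~\ref{weyl}, applied to the set $S \subset \mathfrak{a}$. Define
$$W_{S} := \{  v\in W_{g} \ | \ v|_{S}=\mathrm{id}_{S}  \},$$
the pointwise stabilizer of $S$ in the Weyl group. By assumption $w\in W_{S}$ and $w\neq e$, so $W_{S}$ is a nontrivial subgroup of $W_{g}$.

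By Theorem~\ref{weyl}(2), $W_{S}$ is generated by those reflections $s_{\alpha}$ (with $\alpha \in \Sigma_{g}$) which it contains. Since a nontrivial group cannot be generated by the empty set, at least one reflection $s_{\alpha}$, for some $\alpha\in \Sigma_{g}$, must lie in $W_{S}$. But $s_{\alpha}$ fixes a vector $X\in\mathfrak{a}$ pointwise if and only if $X\in C_{\alpha}$, so $s_{\alpha}\in W_{S}$ forces $S\subset C_{\alpha}$, which is the desired conclusion.

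There is really no obstacle here: the whole content is packaged inside Theorem~\ref{weyl}(2), which is the standard fact (Theorem 1.12(c) in Humphreys) that the pointwise stabilizer of any subset in a finite reflection group is itself a reflection subgroup. The only thing worth double-checking when writing the final proof is that the hyperplanes $C_{\alpha}$ appearing here coincide with the reflecting hyperplanes of the generators of $W_{g}$, which is exactly their definition in~(\ref{eq5}).
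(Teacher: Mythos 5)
Your proof is correct and follows exactly the paper's own argument: take the pointwise stabilizer $W_{S}$, note it is nontrivial because it contains $w\neq e$, and invoke Theorem \ref{weyl}(2) to extract a reflection $s_{\alpha}\in W_{S}$, whose fixed hyperplane $C_{\alpha}$ then contains $S$. Nothing to change.
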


\begin{proof}
Let $W_{S}\subset W_{g}$ be the subgroup fixing $S$ pointwise. Then $w\in W_{S}$ so $W_{S}$ is nontrivial. It follows from Theorem \ref{weyl} that there exists a reflection in $W_{S}.$ Take $\alpha$ to be a root generating this reflection.
\end{proof}

\subsection{The a-hyperbolic rank}\label{sec:ahyp}

A-hyperbolic dimensions of simple real Lie algebras can be calculated using data in Table \ref{tab1} (for a detailed description how to calculate the a-hyperbolic rank of a simple Lie algebra please refer to \cite{bt}).

\begin{center}
 \begin{table}[h]
 \centering
 {\footnotesize
 \begin{tabular}{| c | c | c |}
   \hline
   \multicolumn{3}{|c|}{ \textbf{\textit{a-hyperbolic ranks of simple Lie algebras}}} \\
   \hline                        
   $\mathfrak{g}$ & $\text{rank}_{a-hyp} (\mathfrak{g})$ & $\text{rank}_{\mathbb{R}} (\mathfrak{g})$ \\
   \hline
   $\mathfrak{sl}(2k,\mathbb{R})$  & k &  2k-1 \\
   {\scriptsize $k\geq 2$} & & \\
   \hline
   $\mathfrak{sl}(2k+1,\mathbb{R})$  & k & 2k \\
   {\scriptsize $k\geq 1$} & & \\
   \hline
   $\mathfrak{su}^{\ast}(4k)$  & k & 2k-1 \\
   {\scriptsize $k\geq 2$} & & \\
   \hline
   $\mathfrak{su}^{\ast}(4k+2)$  & k & 2k \\
   {\scriptsize $k\geq 1$} & & \\
   \hline
   $\mathfrak{so}(2k+1,2k+1)$  & 2k & 2k+1 \\
   {\scriptsize $k\geq 2$} & &  \\
   \hline
	 $\mathfrak{e}_{6}^{\text{I}}$ & 4 & 6 \\
	 \hline
   $\mathfrak{e}_{6}^{\text{IV}}$  & 1 & 2 \\
   \hline  
 \end{tabular}
 }
\captionsetup{justification=centering}
 \caption{
 The table contains all simple real Lie algebras $\mathfrak{g},$ for which $\text{rank}_{\mathbb{R}}(\mathfrak{g}) \neq \text{rank}_{a-hyp}(\mathfrak{g})$ (notation is close to Table 9, page 312 of \cite{ov2}).}
 \label{tab1}
 \end{table}
\end{center}

Table \ref{tab1} can also be used to calculate the a-hyperbolic rank of a reductive real Lie algebra.

\begin{enumerate}
	\item The a-hyperbolic rank of a semisimple Lie algebra equals the sum of a-hyperbolic ranks of all its simple parts.
	\item The a-hyperbolic rank of a reductive Lie algebra equals the a-hyperbolic rank of its derived subalgebra.
\end{enumerate}

There is a close relation between $\mathfrak{b}$ and the set of antipodal hyperbolic orbits in $\mathfrak{g}.$ Recall that an element $X \in \mathfrak{g}$ is called {\it \textbf{hyperbolic}} if $X$ is semisimple (that is, $ad_{X}$ is diagonalizable) and all eigenvalues of $ad_{X}$ are real.
\begin{definition}
An adjoint orbit $GX:=Ad(G)(X)=\{ gX:=Ad_{g}X \ | \ g\in G \}$ is said to be hyperbolic if $X$ (and therefore every element of $GX$) is hyperbolic. An orbit $GY$ is antipodal if $-Y\in GY$ (and therefore for every $Z\in GY,$ $-Z\in GY$).
\end{definition} 
We have the following lemma.

\begin{lemma}
For every hyperbolic orbit $GX$ in $\mathfrak{g}$ the set $GX \cap \mathfrak{a}$ is a single $W_{g}$ orbit in $\mathfrak{a}$.
Furthermore hyperbolic orbit $GY$ in $\mathfrak{g}$ is antipodal if and only if it meets $\mathfrak{b},$ that is
$$\mathfrak{b} \cap GY$$
is nonempty.
\label{lma}
\end{lemma}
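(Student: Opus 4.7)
The plan is to prove the two assertions separately, reducing the whole lemma to manipulations inside $\mathfrak{a}$ governed by the action of $W_g$. The guiding principle is that $\mathfrak{a}^+$ is a fundamental domain for $W_g$ (Theorem \ref{weyl}), so every hyperbolic orbit will be shown to have a canonical representative in $\mathfrak{a}^+$.

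First, to show that $GX \cap \mathfrak{a}$ is a single $W_g$-orbit, I would begin by noting that any hyperbolic $X$ generates an $\mathbb{R}$-split torus in $G$, and any such torus is $G$-conjugate into $A = \exp\mathfrak{a}$; hence $GX \cap \mathfrak{a} \neq \emptyset$. Once a representative $X_{0} \in \mathfrak{a}$ is chosen, the inclusion $W_g \cdot X_{0} \subset GX \cap \mathfrak{a}$ is immediate from the proposition in Section \ref{sec:resroot}. For the reverse inclusion, take $X_{1},X_{2}\in \mathfrak{a}$ with $X_{2}=gX_{1}$. I would invoke Kostant's theorem that two elements of $\mathfrak{p}$ which are $G$-conjugate are already $K$-conjugate, so $kX_{1}=X_{2}$ for some $k\in K$. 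Now $\mathfrak{l}:=Z_{\mathfrak{g}}(X_{2})$ is a $\theta$-stable reductive subalgebra, and both $\mathfrak{a}$ and $k\mathfrak{a}$ are maximal abelian subspaces of $\mathfrak{l}\cap\mathfrak{p}$; by the standard conjugacy of such subspaces inside the reductive subalgebra $\mathfrak{l}$, there exists $k''\in Z_{K}(X_{2})$ with $k''k\mathfrak{a}=\mathfrak{a}$. Then $k''k\in N_{K}(\mathfrak{a})$ sends $X_{1}$ to $X_{2}$, giving $X_{2}\in W_g\cdot X_{1}$.

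For the second assertion, using the first part I may assume $Y \in \mathfrak{a}^+$. If $Y \in \mathfrak{b}$, then $w_{0}(Y) = -Y$, so $-Y \in W_g \cdot Y \subset GY$ and $GY$ is antipodal. Conversely, if $GY$ is antipodal, then by the first part $-Y \in GY \cap \mathfrak{a} = W_g \cdot Y$. Since $-Y$ lies in $-\mathfrak{a}^{+} = w_{0}(\mathfrak{a}^{+})$, the element $-w_{0}(Y)$ lies in $\mathfrak{a}^+\cap W_g\cdot Y$; but $\mathfrak{a}^+$ meets each $W_g$-orbit in exactly one point, which forces $-w_{0}(Y) = Y$, i.e.\ $Y \in \mathfrak{b}$, so $GY \cap \mathfrak{b}$ is nonempty.

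The main obstacle is the first part: the passage from $G$-conjugacy to $W_g$-conjugacy relies on the nontrivial Kostant-type result on $K$-orbits in $\mathfrak{p}$, and the centralizer argument must be carried out carefully when $X_{2}$ is non-regular so that $\mathfrak{l}$ is strictly larger than $\mathfrak{a}$, requiring the internal conjugacy of Cartan subspaces inside the smaller reductive algebra $\mathfrak{l}$. Once the first assertion is established, the second reduces to a short application of the identity $w_{0}(\mathfrak{a}^{+}) = -\mathfrak{a}^{+}$ combined with the fundamental-domain property.
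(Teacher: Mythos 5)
Your proof is correct and follows essentially the same route as the paper: the second assertion is argued exactly as in the paper (move a representative into $\mathfrak{a}^{+}$, use $-\mathfrak{a}^{+}=w_{0}(\mathfrak{a}^{+})$ and the fundamental-domain property of Theorem \ref{weyl} to force $-w_{0}(Y)=Y$). The only difference is that for the first assertion the paper simply cites Kostant (Proposition 2.4 in \cite{kos}) as standard, whereas you supply the standard argument itself ($K$-conjugacy of $G$-conjugate elements of $\mathfrak{p}$ plus conjugacy of maximal abelian subspaces inside the centralizer), which is fine.
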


\begin{proof}
It is standard to show that $GX \cap \mathfrak{a}$ is a single $W_{g}$ orbit (see for example Proposition 2.4 in \cite{kos}). If $X \in \mathfrak{b}$ then the longest element $w_{0}\in W_{g}$ takes $X$ to $-X$ so $GX$ is antipodal. Conversely if $GX$ is antipodal then it follows from Theorem \ref{weyl} that there exists $Y\in GX$ such that $Y\in \mathfrak{a}^{+}.$ Also $-Y\in -\mathfrak{a}^{+}$ and so $w_{0}(-Y)\in \mathfrak{a}^{+}.$ Again it follows from Theorem \ref{weyl} that $w_{0}(-Y)=Y.$ Thus
$$w_{0}Y=-Y$$
and so $Y\in \mathfrak{b}.$
\end{proof}

\subsection{Criterion of proper actions}\label{sec:prop}

Let $L\subset G$ be a subgroup of reductive type in $G.$ After conjugation by an element of $G$ we may assume that the Lie algebra $\mathfrak{l}$ of $L$ has a split Cartan subalgebra
$$\mathfrak{j}_{l}=\mathfrak{t}_{l}+\mathfrak{a}_{l}$$
such that $\mathfrak{a}_{l}\subset \mathfrak{a}.$ Then the following criterion of proper actions holds.

\begin{theorem}[Theorem 4.1 in \cite{kob2}] The following conditions are equivalent
\\ (i) $H$ acts on $G/L$ properly,
\\ (ii) $L$ acts on $G/H$ properly,
\\ (iii) $\mathfrak{a}_{h} \cap W_{\mathfrak{g}} \mathfrak{a}_{l} = \{ 0 \}.$
\label{tkob}
\end{theorem}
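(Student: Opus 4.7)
The equivalence (i) $\Leftrightarrow$ (ii) is formal. One reformulates properness as follows: a closed subgroup $H$ acts properly on $G/L$ if and only if for every compact $S\subset G$ the set $SHS\cap L$ is relatively compact in $G$. This criterion is manifestly symmetric in $H$ and $L$, so it also characterises properness of the $L$-action on $G/H$.

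The content of the theorem lies in (ii) $\Leftrightarrow$ (iii), which I would establish via the Cartan projection. Let $\mu\colon G\to\mathfrak{a}^+$ be the map coming from the decomposition $G=K\exp(\mathfrak{a}^+)K$. The key intermediate statement is that $L$ acts properly on $G/H$ if and only if the pair $(\mu(L),\mu(H))$ is \emph{proper} in $\mathfrak{a}^+$, meaning that the set $\{(x,y)\in\mu(L)\times\mu(H):\|x-y\|\le R\}$ is bounded for every $R>0$. This equivalence rests on the compactness of $K$: the escape to infinity of a sequence in $G$ is detected by its $\mathfrak{a}^+$-component, uniformly in the $K$-factors.

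To finish, I would identify the Cartan projections of $H$ and $L$. Since $H$ is of reductive type with $\mathfrak{a}_h\subset\mathfrak{a}$ after conjugation, the Cartan decomposition $H=K_h\exp(\mathfrak{a}_h)K_h$ together with the $W_g$-action on $\mathfrak{a}$ gives
$$\mu(H)=(W_g\mathfrak{a}_h)\cap\mathfrak{a}^+,$$
and similarly $\mu(L)=(W_g\mathfrak{a}_l)\cap\mathfrak{a}^+$. Each is a finite union of closed polyhedral cones inherited from linear subspaces of $\mathfrak{a}$. For such sets the proper-pair condition collapses to the assertion that the underlying subspaces intersect only at the origin: $w\mathfrak{a}_h\cap\mathfrak{a}_l=\{0\}$ for every $w\in W_g$, which rearranges to $\mathfrak{a}_h\cap W_g\mathfrak{a}_l=\{0\}$, exactly condition (iii).

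The main obstacle is the translation of topological properness on $G/H$ into the proper-pair condition on Cartan projections. The easy direction produces a bounded sequence in $G/H$ from a bounded sequence of differences $\mu(l_n)-\mu(h_n)$. The converse is more delicate: given $l_n\to\infty$ in $L$ with $l_n\cdot g_0H$ bounded in $G/H$, one must use the properness of the multiplication map $K\times\mathfrak{a}^+\times K\to G$ to extract $h_n\in H$ so that $\|\mu(l_n)-\mu(h_n)\|$ stays bounded, thereby contradicting properness of the pair. All the analytic difficulty of the theorem is concentrated here.
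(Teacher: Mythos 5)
This statement is imported verbatim from Kobayashi (Theorem 4.1 of \cite{kob2}); the paper gives no proof of it, so there is no in-paper argument to compare against. Your plan is essentially Kobayashi's original one (also Benoist's later treatment): reduce properness to the symmetric condition that $L\cap SHS$ be relatively compact for every compact $S$, push everything into $\mathfrak{a}^{+}$ via the Cartan projection $\mu$, identify $\mu(H)=(W_{g}\mathfrak{a}_{h})\cap\mathfrak{a}^{+}$ and $\mu(L)=(W_{g}\mathfrak{a}_{l})\cap\mathfrak{a}^{+}$ from the compatible Cartan decompositions of $H$ and $L$, and note that for finite unions of subspaces the proper-pair condition degenerates to pairwise trivial intersection (if $V\cap W=\{0\}$ then $d(v,W)\geq\epsilon\|v\|$ on $V$, so points within distance $R$ of $W$ are bounded). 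All of these steps are correct as stated.

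The one genuine gap sits exactly where you say the difficulty is concentrated, and the tool you invoke there is too weak to close it. What is needed is the uniform displacement estimate: for every compact $S\subset G$ there is $C_{S}$ with $\|\mu(s_{1}gs_{2})-\mu(g)\|\leq C_{S}$ for all $g\in G$ and $s_{1},s_{2}\in S$. Properness of the multiplication map $K\times\exp(\mathfrak{a}^{+})\times K\to G$ only yields that $\mu$ is a proper map (bounded sets have bounded image), which handles the easy direction but not the implication ``$L\cap SHS$ noncompact $\Rightarrow$ the pair $(\mu(L),\mu(H))$ is not proper'': from $l_{n}=s_{n}h_{n}s_{n}'$ with $l_{n}\to\infty$ you must bound $\|\mu(l_{n})-\mu(h_{n})\|$, and nothing about the properness of the product map controls how far left and right multiplication by a fixed compact set can move the $\mathfrak{a}^{+}$-component. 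Kobayashi proves the estimate by expressing the fundamental weight functionals of $\mu(g)$ as logarithms of operator norms of $g$ in suitable finite-dimensional representations and using submultiplicativity of norms; some argument of this kind must be supplied. The reverse direction does work as you sketch it: given a non-proper pair, write $\exp(\mu(l_{n}))=\exp(\mu(h_{n}))\exp(\mu(l_{n})-\mu(h_{n}))$ with the last factor confined to a fixed compact set, which exhibits $l_{n}$ in $SHS$ for a single compact $S$.
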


Now assume that $L$ is locally isomorphic to $SL(2,\mathbb{R}).$ Then $\mathfrak{a}_{l}$ is 1-dimensional. On the other hand $\mathfrak{l}\cong \mathfrak{sl}(2,\mathbb{R})$ and so $\mathfrak{l}$ is generated by three vectors $(H,E,F)$ of $\mathfrak{g}$ (called \textbf{\textit{sl(2,R)-triple}}) such that $H$ is semisimple and
$$[H,E]=2E, \ \ [H,F]=-2F \ \text{\rm and} \ [E,F]=H.$$
Thus $\mathfrak{a}_{l}=\mathbb{R}H$ and we can reformulate the above criterion using Lemma \ref{lma}.

\begin{lemma}
$L$ acts properly on $G/H$ if and only if $W_{g}H \cap \mathfrak{a}_{h} = \emptyset .$
\label{proper}
\end{lemma}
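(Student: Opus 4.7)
The natural approach is to specialize Kobayashi's properness criterion (Theorem \ref{tkob}) to the one-dimensional situation. By the setup preceding the statement we may assume, after conjugation by an element of $G$, that $\mathfrak{a}_l\subset \mathfrak{a}$. Since $\mathfrak{l}\cong \mathfrak{sl}(2,\mathbb{R})$ is generated by an $sl(2,\mathbb{R})$-triple $(H,E,F)$ with $H$ hyperbolic, and $\mathfrak{a}_l$ is one-dimensional, we have $\mathfrak{a}_l=\mathbb{R}H$ with $H\in \mathfrak{a}$.

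First I would invoke Theorem \ref{tkob}: $L$ acts properly on $G/H$ if and only if $\mathfrak{a}_h\cap W_g\mathfrak{a}_l=\{0\}$. Since each $w\in W_g$ acts linearly on $\mathfrak{a}$, we have $w\mathfrak{a}_l=w(\mathbb{R}H)=\mathbb{R}(wH)$, so
$$W_g\mathfrak{a}_l=\bigcup_{w\in W_g}\mathbb{R}(wH).$$
The properness condition therefore becomes: $\mathfrak{a}_h\cap \mathbb{R}(wH)=\{0\}$ for every $w\in W_g$.

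Next I would use that $\mathfrak{a}_h$ is a linear subspace of $\mathfrak{a}$, so its intersection with the one-dimensional subspace $\mathbb{R}(wH)$ is either $\{0\}$ or all of $\mathbb{R}(wH)$. Because $H\ne 0$ and $w$ is invertible, $wH\ne 0$, and the second alternative occurs exactly when $wH\in \mathfrak{a}_h$. Consequently $\mathfrak{a}_h\cap W_g\mathfrak{a}_l=\{0\}$ if and only if $wH\notin \mathfrak{a}_h$ for every $w\in W_g$, i.e.\ $W_gH\cap \mathfrak{a}_h=\emptyset$. This gives both directions.

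No step here is really an obstacle; the lemma is essentially an immediate reformulation of Kobayashi's criterion made possible by $\dim\mathfrak{a}_l=1$. The only conceptual content worth mentioning is that by Lemma \ref{lma}, since $H$ is hyperbolic, $W_gH$ is exactly $GH\cap \mathfrak{a}$, so the condition $W_gH\cap \mathfrak{a}_h=\emptyset$ admits the intrinsic reading $GH\cap \mathfrak{a}_h=\emptyset$; this is the form that will be useful when combining the criterion with the a-hyperbolic rank machinery in the proof of Theorem \ref{twg}.
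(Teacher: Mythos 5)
Your argument is correct and is exactly the reformulation the paper intends: it specializes Theorem \ref{tkob} to the one-dimensional $\mathfrak{a}_{l}=\mathbb{R}H$ and uses the fact that a line in $\mathfrak{a}$ meets the subspace $\mathfrak{a}_{h}$ either trivially or entirely, together with the observation via Lemma \ref{lma} that $W_{g}H=GH\cap\mathfrak{a}$. The paper leaves this step implicit, and your write-up supplies precisely the omitted details with no gaps.
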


\section{Proof of Theorem \ref{twg}}\label{sec:proof}

First notice the following.

\begin{lemma}
Let $B,V_{1},...,V_{n}$ be subspaces of a finite dimensional Euclidean space $V$ and let
$$B\subset \bigcup_{k=1}^{n} V_{k}.$$
Then there exists $j, \ 1\leq j \leq n$ such that $A\subset V_{j}.$
\label{lemlem}
\end{lemma}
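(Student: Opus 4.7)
The plan is to prove the lemma by induction on $n$. The underlying principle is classical: a vector space over an infinite field cannot be covered by finitely many proper subspaces, and the Euclidean structure on $V$ plays no role — only linearity of the $V_k$ matters. (The statement should read $B\subset V_j$; the letter $A$ appears to be a typo for $B$.)

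For the base case $n=1$ there is nothing to prove. For the inductive step, I argue by contradiction: assume $B\not\subset V_k$ for every $k\in\{1,\ldots,n\}$. By the inductive hypothesis applied to the union $V_1\cup\cdots\cup V_{n-1}$, we cannot have $B\subset V_1\cup\cdots\cup V_{n-1}$, so there exists $y\in B$ with $y\notin V_1\cup\cdots\cup V_{n-1}$; since $B\subset\bigcup_{k=1}^n V_k$, this forces $y\in V_n$. Similarly, from $B\not\subset V_n$, pick $x\in B$ with $x\notin V_n$.

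Now use that $B$ is a subspace containing both $x$ and $y$: the entire affine line $\ell:=\{x+ty\mid t\in\mathbb{R}\}$ lies in $B$, hence in $\bigcup_{k=1}^n V_k$. Since $\mathbb{R}$ is infinite and there are only finitely many $V_k$, the pigeonhole principle yields some index $k$ and two distinct parameters $t_1\neq t_2$ with $x+t_1y,\,x+t_2y\in V_k$. Because $V_k$ is a linear subspace, the difference $(t_2-t_1)y$ lies in $V_k$, hence $y\in V_k$, and then $x=(x+t_1y)-t_1y\in V_k$ as well. If $k=n$ this contradicts $x\notin V_n$; if $k<n$ it contradicts $y\notin V_1\cup\cdots\cup V_{n-1}$. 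Either way we reach a contradiction, completing the induction.

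There is essentially no obstacle here: the only nontrivial ingredient is the passage from ``every point of $\ell$ lies in some $V_k$'' to ``some $V_k$ contains a whole positive-dimensional affine piece of $\ell$'', and this is immediate from the infinitude of $\mathbb{R}$. The argument is the standard subspace-avoidance lemma, so I would write it up in the few lines sketched above.
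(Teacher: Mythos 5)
Your proof is correct and takes essentially the same route as the paper's: induction on $n$, extraction of two witness vectors (one in the ``exceptional'' subspace but not the others, one outside it), and a contradiction from the linear combinations $x+ty$. The only difference is in the final step --- you use pigeonhole on the infinitely many points of the line (which is arguably cleaner and works over any infinite field), whereas the paper takes the closure of the dense subset $\{aX+bY \mid a\neq 0\}$ of $\mathrm{Span}(X,Y)$ inside the closed set $\bigcup_{k\neq i}V_k$ --- and you correctly flag that the $A$ in the statement is a typo for $B$.
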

\begin{proof}
We will use induction on $n.$ If $n=1$ then $B \subset V_{1}.$ So assume that the lemma is valid for the sum of $n-1$ linear subspaces of $V.$ Let
$$B\subset \bigcup_{k=1}^{n} V_{k}.$$ 
If there exists $i, \ 1\leq i \leq n$ such that
\begin{equation}
B \subset \bigcup\limits_{k=1,k\neq i}^{n} V_{k}
\label{eqq1}
\end{equation}
then we can apply induction to (\ref{eqq1}). If it is not the case then we can find nonzero vectors $X,Y \in B$ such that
$$X\in \bigcup\limits_{k=1,k\neq i}^{n} V_{k}, \ X\notin V_{i}, \ Y\notin \bigcup\limits_{k=1,k\neq i}^{n} V_{k}, \ Y\in V_{i}.$$
But then for $a\neq 0$ we have $aX+bY \notin V_{i}$ and $aX+bY \in B \subset \bigcup\limits_{k=1}^{n} V_{k}.$ Therefore
$$\{ aX+bY \ | \ a\neq 0 \} \subset \bigcup\limits_{k=1,k\neq i}^{n} V_{k}.$$
After taking closures of this sets we obtain
$$\text{\rm Span}(X,Y) \subset \bigcup\limits_{k=1,k\neq i}^{n} V_{k} \ \Rightarrow \ Y\in \bigcup\limits_{k=1,k\neq i}^{n} V_{k}.$$
A contradiction.
\end{proof}

Let $w_{0}$ and $w_{0}^{h}$ be the longest elements for $W_{g}$ and $W_{h},$ respectively. Define $\mathfrak{b}$ and $\mathfrak{b}_{h}$ as in (\ref{eq1}). We can assume that 
\begin{equation}
\mathfrak{b}_{h}\subset \mathfrak{b}.
\label{eq8}
\end{equation}
Indeed if $HX$ is an antipodal hyperbolic orbit in $\mathfrak{h},$ $X \in \mathfrak{b}_{h}\subset \mathfrak{a}_{h} \subset \mathfrak{a},$ then $GX$ is an antipodal hyperbolic orbit in $\mathfrak{g}.$ Therefore there exists $g\in G$ such that
$$gX\in \mathfrak{b} \subset \mathfrak{a}.$$
It follows from Lemma \ref{lma} that there exists $w\in W_{g}$ for which
$$wX \in \mathfrak{b} \ \Rightarrow X\in w^{-1}\mathfrak{b}.$$
We obtain
$$\mathfrak{b}_{h} \subset \bigcup_{w\in W_{g}} w\mathfrak{b}.$$
Since $W_{g}$ is finite it follows from Lemma \ref{lemlem} that there exists $w_{1}\in W_{g}$ with the following property: $\mathfrak{b}_{h} \subset w_{1}\mathfrak{b}.$ After conjugating $\mathfrak{h}$ by $w_{1}$ we obtain (\ref{eq8}).

\textbf{Step 1}: $C2\Rightarrow C3$

Since $\mathfrak{b}_{h}\subset \mathfrak{b}$ it follows from Theorem \ref{ben} that $\text{\rm dim}\mathfrak{b}_{h}< \text{\rm dim}\mathfrak{b}.$ Also, since $G/H$ is a strongly regular space we can assume that $\Sigma_{h} \subset \Sigma_{g}$ (see Lemma \ref{lg}). Thus $W_{h}\subset W_{g}$ and $w_{0}^{h}\neq w_{0}.$

Since $H$ is of inner type we have $\mathfrak{a}_{h}=\mathfrak{b}_{h}.$ Therefore
$$\forall_{X\in \mathfrak{a}_{h}} \ w_{0}w_{0}^{h}(X)=w_{0}(-X)=X$$
and so $\mathfrak{a}_{h}$ is a set that $w_{0}w_{0}^{h}\in W_{g}$ fixes pointwise. It follows from Lemma \ref{ref} that we can choose $\alpha \in \Sigma_{g}$ so that
\begin{equation}
\alpha (\mathfrak{a}_{h})\equiv 0.
\label{eq10}
\end{equation}

For every root $\alpha \in \Pi_{g}$ construct a sl(2,R)-triple $( H_{\alpha}, E_{\alpha}, F_{-\alpha} )$ so that $E_{\alpha} \in \mathfrak{g}_{\alpha},$ $F_{-\alpha} \in \mathfrak{g}_{-\alpha}$ and $H_{\alpha}\in [\mathfrak{g}_{\alpha},\mathfrak{g}_{-\alpha}].$ Define $H \in \mathfrak{a}$ by: $\alpha (H)=2$ for every $\alpha \in \Pi_{g}$ and $H\in [\mathfrak{g},\mathfrak{g}].$ Since $\{ H_{\alpha} \ | \ \alpha \in \Pi_{g} \}$ spans $\mathfrak{a} \cap [\mathfrak{g},\mathfrak{g}]$ we can find $\{ a_{\alpha} \ | \ \alpha \in \Pi_{g} \} \subset \mathbb{R}$ so that
$$H= \sum_{\alpha \in \Pi_{g}} a_{\alpha}H_{\alpha}.$$
Set
$$E:=\sum_{\alpha \in \Pi_{g}}E_{\alpha}, \ \ F:=\sum_{\alpha \in \Pi_{g}} a_{\alpha}F_{-\alpha}.$$
Then $$[H,E] = \sum\limits_{\alpha \in \Pi_{g}} \alpha (H) E_{\alpha}=2E,$$
$$[H,F] = \sum\limits_{\alpha \in \Pi_{g}} a_{\alpha}(-\alpha (H)) F_{-\alpha}=-2F,$$
\begin{equation}
[E,F] = \sum\limits_{\alpha ,\beta \in \Pi_{g}} a_{\beta}[E_{\alpha},F_{-\beta}]=\sum\limits_{\alpha \in \Pi_{g}} a_{\alpha}[E_{\alpha},F_{-\alpha}]=\sum\limits_{\alpha \in \Pi_{g}} a_{\alpha} H_{\alpha}=H,
\label{eq11}
\end{equation}
and (\ref{eq11}) follows from (\ref{eq3}) since the difference of two simple roots is never a root. Therefore $( H,E,F  )$ defines a sl(2,R)-triple in $\mathfrak{g}.$

We will show that the subgroup $L\subset G$ induced by $\mathfrak{l}=\text{\rm Span}(H,E,F)\cong \mathfrak{sl}(2,\mathbb{R})$ acts properly on $G/H.$ By Lemma \ref{proper} we only need to show that
$$W_{g}H \cap \mathfrak{a}_{h} = \emptyset .$$
Assume that there exists $w_{2}\in W_{g}$ such that
$$w_{2}H \in \mathfrak{a}_{h} \ \Rightarrow \ H\in w_{2}^{-1}\mathfrak{a}_{h}.$$
By (\ref{eq10}) $(w_{2}^{-1}\alpha) (H)=0$ and so $s_{w_{2}^{-1}\alpha}H=H$. By construction of $(H,E,F)$ we have $H \in \text{\rm Int}(\mathfrak{a}^{+}).$ But Theorem \ref{weyl} implies that the isotropy group of $H$ is trivial - a contradiction.

\textbf{Step 2}: $\text{\rm rank}_{a-hyp}\mathfrak{g}\leq \text{\rm rank}_{a-hyp}\mathfrak{h} \ \Rightarrow \ \neg C3$

It follows from (\ref{eq8}) and the equality $\mathfrak{a}_{h}=\mathfrak{b}_{h}$ that in this case $\mathfrak{b}=\mathfrak{a}_{h}.$ By Theorem $\ref{ben}$ the space $G/H$ does not fulfill the condition $C2.$ Therefore it does not fulfill the condition $C3.$

\textbf{Step 3}: $\text{\rm rank}_{a-hyp}\mathfrak{g} > \text{\rm rank}_{a-hyp}\mathfrak{h} \ \Rightarrow \ C2$

It follows that $\text{\rm dim}\mathfrak{b} > \text{\rm dim}\mathfrak{a}_{h}.$ Thus for any $w\in W_{g},$ $w\mathfrak{a}_{h}$ does not contain $\mathfrak{b}.$ Our claim follows from Theorem \ref{ben}.

\section{Appendix}\label{sec:appen}

Please recall that the split Cartan subalgebra of $\mathfrak{so}(4,4)$ can be identified with the Euclidean space $\mathbb{R}^{4}.$ Let $\{ e_{1}, e_{2}, e_{3}, e_{4} \}$ be a standard orthonormal basis of $\mathbb{R}^{4}.$ Denote by $(\mathbb{R}^{4})^{\ast}$ the dual space of $\mathbb{R}^{4}.$ Then
$$\Sigma_{so(4,4)}=\{ \pm e_{i} \pm e_{j} \ | \ 1\leq i < j \leq 4 \} \subset (\mathbb{R}^{4})^{\ast}.$$
The Weyl group $W_{so(4,4)}$ acts on $\mathbb{R}^{4}$ by permuting coordinates of a vector and changing the sign of an even number of coordinates of a vector.
As is shown in Example 5.3.7 in \cite{cmg} semisimple elements of all (up to conjugation) sl(2,R)-triples in $\mathfrak{so}(4,4)$ are given in the first column of Table \ref{tab2}. The second column of this table shows a transformation of a given semisimple element (denoted by $w(H)$) by some element of the Weyl group $W_{so(4,4)}.$ Take $a=(3,1,0,2),$ $b=(2,0,0,1)$ and $c=(0,0,1,0).$ The third column of Table \ref{tab2} presents $w(H)$ as a linear combination of vectors $a,b,c.$ 

\begin{center}
 \begin{table}[h]
 \centering
 {\footnotesize
 \begin{tabular}{| c | c | c |}
   \hline                    
   Semisimple elements $W$ & $w(H)$ & Linear combination of $a,b,c$ \\
   \hline
   $(6,4,2,0)$  & $(6,2,0,4)$ &  $2a$ \\
   \hline
   $(4,2,2,0)$  & $(4,0,2,2)$ & $2(b+c)$ \\
   \hline
   $(3,3,1,1)$  & $(-3,1,3,-1)$ & $a-3b+3c$ \\
   \hline
   $(3,3,1,-1)$  & $(-3,1,-3,-1)$ & $a-3b-3c$ \\
   \hline
   $(4,2,0,0)$  & $(4,0,0,2)$ & $2b$ \\
   \hline
	 $(2,1,1,0)$ & $(2,0,1,1)$ & $b+c$ \\
	 \hline
   $(1,1,1,1)$  & $(1,1,1,1)$ & $a-b+c$ \\
   \hline  
	  $(1,1,1,-1)$  & $(1,1,-1,1)$ & $a-b-c$ \\
   \hline  
	  $(2,0,0,0)$  & $(0,0,2,0)$ & $2c$ \\
   \hline  
	  $(1,1,0,0)$  & $(-1,1,-0,0)$ & $a-2b$ \\
   \hline  
	  $(0,0,0,0)$  & $(0,0,0,0)$ & $0$ \\
   \hline  
 \end{tabular}
 }
\captionsetup{justification=centering}
 \caption{
 Semisimple elements of nilpotent orbits in $\mathfrak{so}(4,4).$
 }
 \label{tab2}
 \end{table}
\end{center}

 Therefore any $W_{g}$ orbit of a semisimple element of any sl(2,R) triple in $\mathfrak{so}(4,4)$ meets $\mathfrak{u}_{1}:=\text{\rm Span(a,b,c)}.$ Take the root $\alpha:=-e_{1}+e_{2}$ and let $\mathfrak{u}^{+},$ $\mathfrak{u}^{-}$ be root spaces corresponding to $\alpha$ and $-\alpha,$ respectively. We have $[\mathfrak{u}^{+},\mathfrak{u}^{-}]=\text{\rm Span}(-1,1,0,0)\subset \mathfrak{u}_{1}.$ Put
$$\mathfrak{u}:=\mathfrak{u}_{1}+\mathfrak{u}^{+}+\mathfrak{u}^{-}.$$
One easily sees that $\mathfrak{u}$ is a reductive subalgebra of $\mathfrak{so}(4,4)$ isomorphic to $\mathbb{R}^{2}+\mathfrak{sl}(2,\mathbb{R}).$ Moreover
$$\text{\rm rank}_{\mathbb{R}}\mathfrak{u}=3=\text{\rm dim}\mathfrak{u}_{1} \ \text{\rm and} \ \text{\rm rank}_{a-hyp}\mathfrak{u}=1,$$
as $\mathfrak{u}_{1}$ is a split Cartan subalgebra of $\mathfrak{u}.$ Let $U\subset SO(4,4)$ be the corresponding closed subgroup. It follows from Lemma \ref{proper} and the construction of $\mathfrak{u}$ that $G/U$ does not admit proper action of $SL(2,\mathbb{R}).$ Also
$$\text{\rm rank}_{a-hyp}\mathfrak{so}(4,4)=4=\text{\rm dim} \mathfrak{b}_{so(4,4)}> \text{\rm dim}\mathfrak{u}_{1}$$
therefore it follows from Theorem \ref{ben} that $G/U$ admits a properly discontinuous action of an infinite non-virtually abelian subgroup of $G.$

Department of Mathematics and Computer Science

University of Warmia and Mazury

S\l\/oneczna 54, 10-710, Olsztyn, Poland

e-mail: mabo@matman.uwm.edu.pl

\end{document}